\DeclareMathOperator{\E}{\mathbb{E}}
\newcommand{\vect}[2]{\ensuremath{[\begin{array}{#1} #2 \end{array}]}}
\newcommand{\norm}[1]{\ensuremath{\left\| #1 \right\|}}
\newcommand{\abs}[1]{\ensuremath{{\left\vert #1 \right\vert}}}
\DeclareMathOperator*{\argmin}{argmin}
\DeclareMathOperator*{\minimize}{minimize}
\DeclareMathOperator{\subjectto}{subject\ to}
\newcommand{\calC}{\ensuremath{\mathcal{C}}}
\newcommand{\calF}{\ensuremath{\mathcal{F}}}
\newcommand{\calH}{\ensuremath{\mathcal{H}}}
\newcommand{\calK}{\ensuremath{\mathcal{K}}}
\newcommand{\calL}{\ensuremath{\mathcal{L}}}
\newcommand{\calS}{\ensuremath{\mathcal{S}}}
\newcommand{\calU}{\ensuremath{\mathcal{U}}}
\newcommand{\calX}{\ensuremath{\mathcal{X}}}
\newcommand{\bA}{\ensuremath{\bm{A}}}
\newcommand{\bB}{\ensuremath{\bm{B}}}
\newcommand{\bI}{\ensuremath{\bm{I}}}
\newcommand{\bP}{\ensuremath{\bm{P}}}
\newcommand{\bQ}{\ensuremath{\bm{Q}}}
\newcommand{\bR}{\ensuremath{\bm{R}}}
\newcommand{\bW}{\ensuremath{\bm{W}}}
\newcommand{\bGamma}{\ensuremath{\bm{\Gamma}}}
\newcommand{\bXi}{\ensuremath{\bm{\Xi}}}
\newcommand{\bg}{\ensuremath{\bm{g}}}
\newcommand{\bs}{\ensuremath{\bm{s}}}
\newcommand{\bu}{\ensuremath{\bm{u}}}
\newcommand{\bv}{\ensuremath{\bm{v}}}
\newcommand{\bw}{\ensuremath{\bm{w}}}
\newcommand{\bx}{\ensuremath{\bm{x}}}
\newcommand{\by}{\ensuremath{\bm{y}}}
\newcommand{\bz}{\ensuremath{\bm{z}}}
\newcommand{\bgamma}{\ensuremath{\bm{\gamma}}}
\newcommand{\blambda}{\ensuremath{\bm{\lambda}}}
\newcommand{\bmu}{\ensuremath{\bm{\mu}}}
\newcommand{\bxi}{\ensuremath{\bm{\xi}}}
\newcommand{\bbR}{\ensuremath{\mathbb{R}}}
\newcommand{\bub}{\ensuremath{\bm{{\bar{u}}}}}
\newcommand{\bwb}{\ensuremath{\bm{{\bar{w}}}}}
\newcommand{\bxb}{\ensuremath{\bm{{\bar{x}}}}}
\newcommand{\bzb}{\ensuremath{\bm{{\bar{z}}}}}
\newcommand{\bxh}{\ensuremath{\bm{{\hat{x}}}}}
\newcommand{\bzh}{\ensuremath{\bm{{\hat{z}}}}}
\def\st/{\textsuperscript{st}}
\def\nd/{\textsuperscript{nd}}
\def\rd/{\textsuperscript{rd}}
\def\th/{\textsuperscript{th}}
\newcommand{\setR}{\bbR}
\newcommand{\zeros}{\ensuremath{\bm{0}}}
\def\nnil{\nil}
\newcounter{prob}
\newenvironment{prob}[1][\nil]{%
	\def\tmp{#1}
	\equation
	\ifx\tmp\nnil
		\refstepcounter{prob}
		\tag{P\Roman{prob}}
	\else
		\tag{\tmp}
	\fi
	\aligned%
}{%
	\endaligned\endequation%
}
\newenvironment{prob*}{%
	\csname equation*\endcsname%
	\aligned%
}{%
	\endaligned%
	\csname endequation*\endcsname%
}
\newcommand{\includesvg}[2][scale=1]{\includegraphics[#1]{#2.pdf}}
\algrenewcommand\algorithmicdo{}
\newtheorem{proposition}{Proposition}
\newtheorem{assumption}{Assumption}
\title{\LARGE \bf Resilient Control: Compromising to Adapt}
\author{Luiz~F.~O.~Chamon, Alexandre Amice, Santiago Paternain, and Alejandro~Ribeiro%
\thanks{Department of Electrical and Systems Engineering, University of Pennsylvania, Philadelphia, USA.
\texttt{\small \{luizf, amice, spater, aribeiro\}@seas.upenn.edu} This work
was supported by ARL DCIST CRA W911NF-17-2-0181.%
}}
\begin{document}
\maketitle

\begin{abstract}

In optimal control problems, disturbances are typically dealt with using robust solutions, such as~$\calH_\infty$ or tube model predictive control, that plan control actions feasible for the worst-case disturbance. Yet, planning for every contingency can lead to over-conservative, poorly performing solutions or even, in extreme cases, to infeasibility. Resilience addresses these shortcomings by adapting the underlying control problem, e.g., by relaxing its specifications, to obtain a feasible, possibly still valuable trajectory. Despite their different aspects, robustness and resilience are often conflated in the context of dynamical systems and control. The goal of this paper is to formalize, in the context of optimal control, the concept of resilience understood as above, i.e., in terms of adaptation. To do so, we introduce a resilient formulation of optimal control by allowing disruption-dependent modifications of the requirements that induce the desired resilient behavior. We then propose a framework to design these behaviors automatically by trading off control performance and requirement violations. We analyze this resilience-by-compromise method to obtain inverse optimality results and quantify the effect of disturbances on the induced requirement relaxations. By proving that robustness and resilience optimize different objectives, we show that these are in fact distinct system  properties. We conclude by illustrating the effect of resilience in different control problems.

\end{abstract}

\section{INTRODUCTION}
	\label{S:intro}

Coping with disruptions is a core requirement for autonomous systems operating in the real world. Indeed, as these complex systems leave the controlled setting of the lab, it becomes increasingly important to enable them to safely negotiate adverse situations arising from the dynamic and fast-evolving environments in which they must operate~\cite{Gunes14a, Stankovic05o}. In the context of dynamical systems and control, this issue is often addressed through the concept of \emph{robustness}. The robust approach plans for the worst so that the resulting system can achieve its objective~(e.g., state regulation) regardless of the conditions in which it operates. Techniques such as~$\calH_\infty$ control, tube model predictive control~(MPC), and robust system-level synthesis have been developed specifically to address this issue~\cite{Dullerud13a, Li00r, Borrelli17p, Anderson19s}. In simple terms, robust systems are ``hard to break.''

Yet, the success of robustness may also be the root of its shortcomings. It is often not viable to plan for every contingency as it would lead to over-conservative behaviors whose performance is deficient even under normal operating conditions. In extreme cases, the resulting control problem may simply be infeasible. Hence, the question is no longer how to operate under or deal with a certain level of disturbance, but what to do when things go so catastrophically wrong that the original equilibrium is no longer viable. In such cases, the only solution is to modify the system requirements, e.g., by removing unlikely contingencies or relaxing specifications, to find an alternative equilibrium.

In ecology, this capacity of systems to adapt and recover from disruptions by modifying their underlying operation is known as \emph{resilience}~\cite{Holling73r, Holling96e}. Since its introduction in the 1970s, it has been observed in a myriad of ecosystems and incorporated in fields such as psychology and dynamical/cyber-physical systems~\cite{Werner89v, Rodin14t, Rieger09r, Zhu15game, Ramachandran19r}. Contrary to stability, characterized by the persistence of a system near an equilibrium, resilience emphasizes conditions far from steady state, where instabilities can flip a system into another behavior regime~\cite{Holling96e}. In simple terms, resilient systems are ``easy to fix.''

In dynamical systems and control, robustness and resilience are often conflated. Even when resilience is described, the sought after behaviors are often robust in the sense of the above definitions, e.g.,~\cite{Chen18r, Tzoumas17r, Guerrero17f}. Even in his seminal works, Holling discriminates between ``engineering resilience''~(robustness) and ``ecological resilience,'' by distinguishing systems with a single equilibrium from those with multiple equilibria~\cite{Holling96e}. Though resilient solutions involving adaptation to disruptions have been studied, such as in~\cite{Gunes14a, Stankovic05o, Zhu15game, Ramachandran19r}, a formal, general definition of resilient control akin to its robust counterpart is still lacking.

The goal of this work is to formalize resilience in the context of optimal control. We begin by introducing the general problem of constrained control under disturbances and its robust solution~(Section~\ref{S:prob}). We then formulate the resilient optimal control problem by allowing controlled constraint violations in optimal control problems~(Section~\ref{S:resilience}). To be useful, however, these violations must be appropriately designed, which cannot be done manually for any moderately-sized problem. To address this issue, we put forward a framework to obtain requirement modifications by trading off control performance and violation costs. We analyze this formulation to obtain inverse optimality results and quantify the effect of disturbances on the violations. By proving that robustness and resilience optimize different objectives, we show that they are complementary properties that in many applications, may be simultaneously required~(Section~\ref{S:resilience_theory}). We conclude by deriving a practical algorithm to solve resilient control problems~(Section~\ref{S:algorithm}) and illustrating its use~(Section~\ref{S:sims}).

\section{PROBLEM FORMULATION}
	\label{S:prob}

Let~$\bXi$ be a random variable taking values in a compact set~$\calK \subseteq \setR^d$ according to some measure~$\mathfrak{p}$. We assume for simplicity that~$\mathfrak{p}$ is absolutely continuous with respect to the Lebesgue measure, so that~$\bXi$ has a probability density function~(Radon-Nikodym derivative) denoted~$f_{\bXi}$. Its realizations~$\bxi$ denote states of the world that may be construed as disturbances to the normal operation of an autonomous system represented by the prototypical constrained optimal control problem
\begin{prob}\label{P:generic}
	P^\star(\bXi) = \min_{\bz \in \setR^p}& &&J(\bz)
	\\
	\subjectto& &&g_i(\bz, \bXi) \leq 0
		\text{,} \quad i = 1,\dots,m
		\text{,}
\end{prob}
where~$\bz$ denotes the decision variable, e.g., actuation strength, $J$ is a control performance measure, and the~$g_i(\cdot,\bxi)$ describe the control requirements under~$\bxi$.

\begin{assumption}\label{A:convexity}
	The control performance~$J: \setR^p \to \setR$ is a strongly convex, continuously differentiable function, $g_i(\bz, \cdot) \in L_2$ are~$L_i$-Lipschitz continuous with respect to the~$\ell_\infty$-norm for all~$\bz \in \setR^p$, and~$g_i(\cdot, \bxi)$ are coercive~(radially unbounded), convex functions for all~$\bxi \in \calK$. The requirement functions~$g_i$ have continuous derivatives with respect to~$\bz$ and~$\bxi$.
\end{assumption}

Note that since~\eqref{P:generic} is parameterized by a random variable, its optimal solution~$\bz^\star(\bXi)$ and value~$P^\star(\bXi)$ are random and depend on the \emph{a priori} unknown disturbance realization. \emph{Our goal is to obtain a deterministic~$\bz^\dagger$ that is feasible for most~(if not all) realizations~$\bxi$ and whose performance~$P^\dagger = J(\bz^\dagger)$ is similar to the optimal~$P^\star(\bxi)$.} Though the latter objective is less critical, it is certainly desired.

To illustrate the use of~\eqref{P:generic} in control, note that it can cast the following constrained LQR problem~\cite{Borrelli17p}:
\begin{prob}\label{P:lqr}
	\minimize_{\bx_k,\,\bu_k}& &&\bx_N^T \bP \bx_N + \sum_{k = 0}^{N-1} \bx_k^T \bQ \bx_k + \bu_k^T \bR \bu_k
	\\
	\subjectto& &&\abs{\bx_k} \leq \bxb
		\text{,} \quad
	\abs{\bu_k} \leq \bub - \bXi_{u,k}
		\text{,}
	\\
	&&&\bx_{k+1} = \bA \bx_k + \bB \bu_k + \bXi_{d,k}
	\text{,}
\end{prob}
where~$\bx_k$ and~$\bu_k$ are the state and control action at time~$k$, respectively, of a linear dynamical system described by the state-space matrices~$\bA$ and~$\bB$, $\bxb$ and~$\bub$ are bounds on the state and actions, and the initial state~$\bx_0$ is given. Here, $\bz$ collects the~$\{\bx_k,\bu_{k-1}\}$ for~$k = 1,\dots,N$. The disturbances in~\eqref{P:lqr} model changes in the dynamics~($\bXi_{d,k}$) and/or disruptions to the system's actuation capabilities~($\bXi_{u,k}$). Namely, a realization~$[\bxi_{u,k}]_i = [\bub]_i$ is equivalent to actuator~$i$ being unavailable at instant~$k$. Hence, while the abstract~\eqref{P:generic} is the object of study of this paper, we are ultimately interested in the control problems it represents, e.g., \eqref{P:lqr}.

In the control literature, a common approach to obtaining the desired~$\bz^\dagger$ is to use the robust formulation of~\eqref{P:generic}
\begin{prob}[\textup{P-RO}]\label{P:robust}
	P^\star_\text{Ro} = \min_{\bz \in \setR^p}& &&J(\bz)
	\\
	\subjectto& &&\Pr\left[ \bg(\bz, \bXi) \leq \zeros \right] \geq 1-\delta
		\text{,}
\end{prob}
where the probability is taken with respect to the distribution of~$\bXi$ and the requirements~$g_i$ are collected in the vector-valued function~$\bg$ for conciseness. The probability of violation parameter~$\delta \in [0,1]$ trades-off feasibility for control performance~\cite{Schwarm99c, Li00r, Borrelli17p}. From the additivity of measures, it is straightforward that reducing~$\delta$ reduces the feasibility set of~\eqref{P:robust}, which may increase the control cost. For~$\delta = 0$, the constraints in~\eqref{P:robust} reduce to the classical worst-case formulation of robustness, enforcing that~$\max_{\bxi \in \calK}\ g_i(\bz, \bxi) \leq 0$, i.e., that the solution is feasible for all possible conditions~$\bxi$~\cite{Dullerud13a}. Yet, these conditions can render the control problem infeasible or lead to solutions with impractical levels of performance. These issues are sometimes overcome by the statistical formulation in~\eqref{P:robust}. Under mild conditions, feasible solutions of~\eqref{P:robust} can be obtained using a deterministic optimization problem~\cite{Li00r, Borrelli17p}.

\begin{proposition}
Let~$\bXi$ be a sub-Gaussian random vector~(e.g., Gaussian or Bernoulli), i.e., $\E\left[ e^{\nu \bu^T \left( \bXi - \E[\bXi] \right)} \right] \leq e^{\nu^2 \sigma^2/2}$ for all~$\nu \in \setR$ and~$\bu \in \setR^d$ such that~$\norm{\bu} = 1$. Then, under Assumption~\ref{A:convexity}, the unique
\begin{prob}[$\widehat{\textup{P}}\textup{-RO}$]\label{P:equivalentRobust}
	\bzh_\text{Ro} = \argmin_{\bz \in \setR^p}& &&J(\bz)
	\\
	\subjectto& &&g_i(\bz, \E[\bXi]) \leq -\epsilon
		\text{,}
\end{prob}
with~$\epsilon = L \sigma \sqrt{2 \log(2 m d/\delta)}$, for~$L = \max_i L_i$, is \eqref{P:robust}-feasible. In particular, if~$\calK \subseteq [0,\bar{\xi}]^d$, then~$\sigma \leq \bar{\xi}/2$.
\end{proposition}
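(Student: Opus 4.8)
The plan is to reduce the chance constraint in~\eqref{P:robust} to a concentration inequality for~$\bXi$ around its mean, using the Lipschitz continuity of the requirement functions from Assumption~\ref{A:convexity} together with the~$\epsilon$-margin enforced by~\eqref{P:equivalentRobust}. Before the probabilistic part, I would dispatch well-posedness: whenever~\eqref{P:equivalentRobust} is feasible, its feasible set~$\bigcap_{i=1}^m \{\bz \in \setR^p : g_i(\bz,\E[\bXi]) \le -\epsilon\}$ is closed and convex, since each~$g_i(\cdot,\E[\bXi])$ is continuous and convex (Assumption~\ref{A:convexity}); as~$J$ is strongly convex it is coercive, so the minimum is attained over this nonempty closed set, and strict convexity of~$J$ makes the minimizer~$\bzh_\text{Ro}$ unique. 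This justifies ``the unique'' in the statement.

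The first substantive step is the deterministic reduction. Fix a realization~$\bxi \in \calK$ and an index~$i$. By the~$L_i$-Lipschitz continuity of~$g_i(\bzh_\text{Ro},\cdot)$ in the~$\ell_\infty$-norm and the feasibility of~$\bzh_\text{Ro}$ for~\eqref{P:equivalentRobust},
\begin{equation*}
	g_i(\bzh_\text{Ro},\bxi) \le g_i(\bzh_\text{Ro},\E[\bXi]) + L_i \norm{\bxi - \E[\bXi]}_\infty \le -\epsilon + L_i \norm{\bxi - \E[\bXi]}_\infty \text{.}
\end{equation*}
Since~$L_i \le L = \max_j L_j$, the event~$\{g_i(\bzh_\text{Ro},\bXi) > 0\}$ is contained in~$\{\norm{\bXi - \E[\bXi]}_\infty > \epsilon/L\}$, and a union bound over~$i = 1,\dots,m$ gives~$\Pr[\bg(\bzh_\text{Ro},\bXi) \not\le \zeros] \le m\,\Pr[\norm{\bXi - \E[\bXi]}_\infty > \epsilon/L]$.

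Next I would bound that tail probability. Writing~$\Xi_j$ for the~$j$-th component of~$\bXi$ and taking~$\bu$ to be~$\pm$ the~$j$-th canonical basis vector in the sub-Gaussian hypothesis (a unit vector regardless of which norm~$\norm{\cdot}$ denotes), each centered component~$\Xi_j - \E[\Xi_j]$ is scalar sub-Gaussian with parameter~$\sigma$; the standard Chernoff bound on its moment-generating function then yields~$\Pr[\abs{\Xi_j - \E[\Xi_j]} > t] \le 2 e^{-t^2/(2\sigma^2)}$, and a union bound over~$j = 1,\dots,d$ gives~$\Pr[\norm{\bXi - \E[\bXi]}_\infty > t] \le 2d\, e^{-t^2/(2\sigma^2)}$. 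Choosing~$t = \epsilon/L = \sigma\sqrt{2\log(2md/\delta)}$ makes the exponent equal~$-\log(2md/\delta)$, so combining with the previous step~$\Pr[\bg(\bzh_\text{Ro},\bXi) \not\le \zeros] \le m \cdot 2d \cdot \delta/(2md) = \delta$, which is precisely feasibility of~$\bzh_\text{Ro}$ for~\eqref{P:robust}. For the final claim, if~$\calK \subseteq [0,\bar{\xi}]^d$ then each~$\Xi_j$ is supported on an interval of length at most~$\bar{\xi}$, so Hoeffding's lemma gives~$\E[e^{\nu(\Xi_j - \E[\Xi_j])}] \le e^{\nu^2 \bar{\xi}^2/8}$, i.e., sub-Gaussianity with parameter~$\bar{\xi}/2$ --- exactly the form used above.

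I do not expect a conceptual obstacle here; the delicate part is purely the bookkeeping of constants. The value of~$\epsilon$ has to be calibrated so that the~$\log(2md/\delta)$ inside the square root simultaneously absorbs the factor~$2$ from the two-sided scalar tail, the union bound over the~$d$ coordinates, and the union bound over the~$m$ constraints; getting one of these factors wrong is the most likely source of error. A minor structural point worth recording is that only the canonical-basis instances of the sub-Gaussian hypothesis are actually used, which is exactly what keeps the dimension-free bound~$\sigma \le \bar{\xi}/2$ (rather than one scaling with~$d$) available.
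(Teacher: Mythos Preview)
Your proposal is correct and follows essentially the same route as the paper: Lipschitz reduction to a deviation of~$\bXi$ from its mean, then a union bound over the~$m$ constraints and the~$d$ coordinates combined with the sub-Gaussian (Hoeffding) tail, with the constant~$\epsilon = L\sigma\sqrt{2\log(2md/\delta)}$ calibrated to make the failure probability equal~$\delta$. Your write-up is in fact slightly more complete than the paper's, since you spell out well-posedness of~\eqref{P:equivalentRobust} and justify the bounded-support claim~$\sigma \le \bar{\xi}/2$ via Hoeffding's lemma, both of which the paper leaves implicit.
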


\begin{proof}
Recall that since~$J$ is strongly convex, the solution of~\eqref{P:equivalentRobust} is unique~\cite{Bertsekas09c}. The proof then follows by bounding~$\Pr \left[ \max_i g_i(\bz^\star_\text{Ro}, \bXi) \leq 0 \right]$ using concentration of measure~\cite{Ledoux01t}. From the Lipschitz continuity of~$g_i$ we get
\begin{align*}
	g_i(\bz^\dagger_\text{Ro}, \bxi)
		&\leq g_i(\bz^\dagger_\text{Ro}, \E[\bXi]) + L_i \norm{\bxi - \E[\bXi]}_\infty
	\\
	{}&\leq -\epsilon + L_i \norm{\bxi - \E[\bXi]}_\infty
		\text{.}
\end{align*}
Note that since~$g_i(\bz^\dagger_\text{Ro}, \E[\bXi]) \leq 0$ we care only about the positive tail of the Lipschitz inequality. To proceed, use the union bound and Hoeffding's inequality to obtain that
\begin{equation}\label{E:hoeffding}
	\Pr \left[ \max_i L_i \norm{\bxi - \E[\bXi]}_\infty \leq \epsilon \right]
		\geq 1 - 2 d \sum_{i = 1}^m \exp\left( \frac{-\epsilon^2}{2 L_i^2 \sigma^2} \right) 
		\text{.}
\end{equation}
Using~$\epsilon$ as in the hypothesis ensures that~\eqref{E:hoeffding} is greater than~$1-\delta$, thus concluding the proof.
\end{proof}

Robust controllers are often deployed in critical applications, such as industrial process control and security constrained power allocation~\cite{Borrelli17p, Capitanescu11s}. Nevertheless, their worst case approach has two shortcomings. First, too stringent requirements on the probability of failure~$\delta$ can result in an infeasible problem or render the solution of~\eqref{P:robust} useless in practice due to its poor performance even in favorable conditions. What is more, sensitive requirements~(i.e., large~$L_i$) lead to large~$\epsilon_i$ in~\eqref{P:equivalentRobust}, considerably reducing its feasible set. Though~\eqref{P:robust} may be feasible even if~\eqref{P:equivalentRobust} is not, obtaining a solution of the former is challenging without the latter except in special cases~\cite{Dullerud13a, Schwarm99c, Li00r, Borrelli17p}. Second, even if~\eqref{P:robust} is feasible and its solution has reasonable performance, the issue remains of what happens in the~$\delta$ portion of the realizations in which a stronger than anticipated disturbance occurs. Indeed, though robust autonomous systems make failures unlikely, they do not account for how the system fails once it does. Hence, though unlikely, failures can be catastrophic. Resilience overcomes these limitations by adapting the underlying optimal control problem to disruptions.

\section{RESILIENT CONTROL}
	\label{S:resilience}

In a parallel to the ecology literature, we define resilience in autonomous systems as \emph{the ability to adapt to, and possibly recover from, disruptions}. In particular, we are interested in dealing with disturbances so extreme that the original control problem becomes ineffective or infeasible. Where robust control would declare failure, resilient control attempts to remain operational by modifying the underlying control problem, reverting to an alternative trajectory that violates requirements in a controlled manner. In practice, this means that when a resilient system suffers a disastrous shock that jeopardizes its ability to solve its original task, it will adapt and modify its requirements in an attempt to at least partially salvage its mission. Resilience is therefore not a replacement for robustness, which may be the only sensible course of action for critical requirements, but a complementary set of behaviors that a control system can display.

\subsection{Resilient optimal control}
\label{S:resilient_control}

To operationalize the above definition of resilient dynamical system, we must embed the optimal control problem~\eqref{P:generic} with the ability to modify its requirements depending on the disruption suffered by the system. A natural way to do so is by associating a disturbance-dependent relaxation~$s_i: \calK \to \setR_+$, $s_i \in L_2$, to the $i$-th~requirement as in
\begin{prob}[\textup{P-RE}]\label{P:parametrized}
	P^\star_\text{Re}(\bs) = \min_{\bz \in \setR^p}& &&J(\bz)
	\\
	\subjectto& &&\bg(\bz, \bxi) \leq \bs(\bxi)
		\text{,} \quad \bxi \in \calK
		\text{,}
\end{prob}
where the vector-valued function~$\bs$ collects the slacks~$s_i$. Depending on~$\calK$, \eqref{P:parametrized} may have a finite or infinite number of constraints. The latter case can be tackled using semi-infinite programming algorithms~\cite{Reemtsen98s, Bonnans00p}.

The violations~$\bs(\bxi)$ in~\eqref{P:parametrized} determine how the underlying control problem is modified to adapt to the operational conditions~$\bxi$. In~\eqref{P:lqr}, for instance, it could correspond to relaxing the state constraints and allowing the system to visit higher risk regions of the state space. If damage to the actuators renders the original control problem infeasible, this may be the only course of action to remain operational.

Observe that for~$\bs \equiv \zeros$, \eqref{P:parametrized} solves the worst-case robust control problem~\eqref{P:robust} for~$\delta = 0$. Indeed, if~$\bg(\bz,\bxi) \leq \zeros$ for all~$\bxi \in \calK$, then~$\Pr[\bg(\bz,\bXi) \leq \zeros] = 1$. This formulation is often found in settings where controllers must abide to requirements under specific contingencies, such as security constrained power allocation~\cite{Capitanescu11s}. In the case of resilience, however, the goal is not to obtain solutions for vanishing slacks, but to adjust~$\bs$ to allow constraint violations for disruptions under which the requirements become too stringent for a robust controller to satisfy. Hence, we are typically interested in solving~\eqref{P:parametrized} with~$\bs(\bxi) \succ \zeros$ for some, if not all, disruptions~$\bxi$.

For any predetermined~$\bs$, \eqref{P:parametrized} is a smooth convex problem that can be solved using any of a myriad of existing methods~\cite{Bertsekas15c}. Yet, designing~$\bs$, which ultimately determines the resilient behavior of the controller, can be quite challenging. Even for a moderate number of contingencies~(cardinality of~$\calK$), finding the right requirement to violate and determining by how much to do so for each state of the world is intricate. This problem is only exacerbated as the number of requirements and/or contingencies grows. In Section~\ref{S:resilience_theory}, we propose a principled approach to designing resilient behavior based on trading off the control performance~$P^\star_\text{Re}(\bs)$ and a measure of violation. Before proceeding, however, we derive the dual problem of~\eqref{P:parametrized} and introduce the results from duality theory needed in the remainder of the paper.

\subsection{Dual resilient control}
	\label{S:duality}

Start by associating the dual variable~$\lambda_i \in L_2^+$ with the~$i$-th requirement, where~$L_2^+ = \{\lambda \in L_2 \mid \lambda \geq 0 \text{ a.e.}\}$. Depending on~$\calK$, $\lambda_i$ may be a function or reduce to a (in)finite-dimensional vector. For conciseness, we collect the~$\lambda_i$ in a vector~$\blambda \in \setR_+^m$. Then, define the Lagrangian of~\eqref{P:parametrized} as
\begin{equation}\label{E:lagrangian}
\begin{aligned}
	\calL(\bz, \blambda, \bs) &= J(\bz)
		+ \int_\calK \blambda(\bxi)^T \big[ \bg(\bz, \bxi) - \bs(\bxi) \big] d\bxi
		\text{.}
\end{aligned}
\end{equation}
From the Lagrangian~\eqref{E:lagrangian}, we obtain the dual problem
\begin{prob}[\textup{D-RE}]\label{P:dual_parametrized}
	D^\star_\text{Re}(\bs) = \max_{[\blambda]_i \in L_2^+}
		\min_{\bz \in \setR^p}\ \calL(\bz, \blambda, \bs)
		\text{.}
\end{prob}

Under mild conditions, $D^\star_\text{Re}(\bs)$ attains~$P^\star_\text{Re}(\bs)$ and solving~\eqref{P:dual_parametrized} becomes equivalent to solving~\eqref{P:parametrized}. This fact together with the convexity of~\eqref{P:parametrized} imply that the well-known KKT necessary conditions are also sufficient. In these cases, we obtain a direct relation between the solutions of~\eqref{P:dual_parametrized} and the sensitivity of~$P_\text{Re}$ with respect to~$\bs$. These facts are formalized in Propositions~\ref{T:zdg} and~\ref{T:diff_P}.

\begin{assumption}\label{A:slater}
There exists~$\bzb$ such that~$\bg(\bzb,\bxi) < \zeros$ for all~$\bxi \in \calK$.
\end{assumption}

\begin{proposition}[{\cite[Prop.~5.3.4]{Bertsekas09c}}]\label{T:zdg}
Under Assumptions~\ref{A:convexity} and~\ref{A:slater}, strong duality holds for~\eqref{P:parametrized}, i.e., $P^\star_\text{Re}(\bs) = D^\star_\text{Re}(\bs)$. Moreover,

\begin{enumerate}[(i)]
	\item if~$\blambda^\star(\bs)$ is a solution of~\eqref{P:dual_parametrized}, then~$\bz_\text{Re}^\star(\bs) = \argmin_{\bz \in \setR^p}\ \calL(\bz, \blambda^\star(\bs), \bs)$ is a solution of~\eqref{P:parametrized};
	
	\item if~$\bz^\prime$ is a feasible point of~\eqref{P:parametrized} and~$[\blambda^\prime]_i \in L_2^+$, then~$\bz^\prime$ is the solution of~\eqref{P:parametrized} and~$\blambda^\prime$ is a solution of~\eqref{P:dual_parametrized} if and only if
\begin{subequations}\label{E:kkt}
\begin{align}
	\nabla \calL(\bz^\prime,\blambda^\prime,\bs) &= \zeros
		\label{E:kkt_grad}
	\\
	[\blambda^\prime(\bxi)]_i \left[ g_i(\bz^\prime,\bxi) - s_i(\bxi) \right] &= \zeros
		\text{, for all } \bxi \in \calK
		\label{E:kkt_slackness}
		\text{.}
\end{align}
\end{subequations}

\end{enumerate}
\end{proposition}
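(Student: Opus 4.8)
The statement is cited as \cite[Prop.~5.3.4]{Bertsekas09c}, so the plan is essentially to verify that \eqref{P:parametrized} satisfies the hypotheses of that classical result and then invoke it; the only real work is bookkeeping. First I would fix an arbitrary admissible slack profile~$\bs$ (recall $s_i \in L_2$, $s_i \geq 0$) and observe that, since $\bs$ shifts each constraint by a fixed nonnegative function, Assumption~\ref{A:slater} immediately gives a strictly feasible point for \eqref{P:parametrized} as well: $\bg(\bzb,\bxi) < \zeros \leq \bs(\bxi)$ for all $\bxi \in \calK$. Together with Assumption~\ref{A:convexity} — $J$ strongly convex and hence the primal objective is bounded below and has compact sublevel sets, and $g_i(\cdot,\bxi)$ convex and coercive so that the feasible set is closed and the infimum is attained — this establishes that \eqref{P:parametrized} is a convex program satisfying Slater's condition, which is exactly what \cite[Prop.~5.3.4]{Bertsekas09c} requires to conclude zero duality gap, $P^\star_\text{Re}(\bs) = D^\star_\text{Re}(\bs)$, and existence of a dual optimizer $\blambda^\star(\bs)$.

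Next I would address the two enumerated consequences. Part~(i): given a dual optimum $\blambda^\star(\bs)$, strong duality means the optimal primal value equals $\min_{\bz}\calL(\bz,\blambda^\star(\bs),\bs)$; strong convexity of $J$ (the Lagrangian inherits strong convexity in $\bz$ since the added term is convex in $\bz$) makes this inner minimizer unique, call it $\bz_\text{Re}^\star(\bs)$, and a standard complementary-slackness argument shows it is primal feasible and attains $P^\star_\text{Re}(\bs)$, hence is the primal solution. Part~(ii) is just the KKT characterization: under convexity plus Slater, the KKT conditions \eqref{E:kkt_grad}--\eqref{E:kkt_slackness} are both necessary and sufficient for a primal-dual pair to be optimal. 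Stationarity \eqref{E:kkt_grad} comes from the inner unconstrained minimization of the (differentiable, by Assumption~\ref{A:convexity}) Lagrangian, and \eqref{E:kkt_slackness} is the pointwise complementary slackness that must hold a.e.\ on $\calK$ for the integral $\int_\calK \blambda(\bxi)^T[\bg(\bz,\bxi)-\bs(\bxi)]\,d\bxi$ to vanish given $\blambda \geq 0$ a.e.\ and $\bg(\bz,\bxi)-\bs(\bxi) \leq \zeros$.

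The one point requiring a little care — and the likely obstacle — is the infinite-dimensional setting when $\calK$ is not finite, since $\blambda_i$ lives in $L_2^+$ rather than $\setR_+^m$ and \cite[Prop.~5.3.4]{Bertsekas09c} is stated for finitely many constraints. To handle this I would note that the $L_2$ integrability of $g_i(\cdot,\bxi)$ and $s_i$ (from Assumptions~\ref{A:convexity} and the definition of $\bs$) makes the Lagrangian well defined, and that Slater's condition in the form of Assumption~\ref{A:slater} with a uniform strict margin (compactness of $\calK$ and continuity of $g_i$ ensure $\sup_{\bxi}\max_i g_i(\bzb,\bxi) < 0$) provides a nonempty interior of the constraint cone, which is the standard sufficient condition for strong duality and existence of multipliers in semi-infinite / conic convex programs; the complementary slackness \eqref{E:kkt_slackness} then holds $d\bxi$-a.e. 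Everything else — uniqueness of the primal solution, equivalence of KKT with optimality — transfers verbatim from the finite-dimensional statement.
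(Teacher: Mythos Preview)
Your proposal is correct and matches the paper's treatment: the paper does not supply a proof at all but simply cites \cite[Prop.~5.3.4]{Bertsekas09c}, so invoking that classical result after verifying the convexity and Slater hypotheses is precisely the intended argument. Your additional remarks on the semi-infinite case (uniform strict feasibility from compactness of~$\calK$ and continuity of~$g_i$, $L_2$-integrability of the Lagrangian integrand, complementary slackness holding $d\bxi$-a.e.) go beyond what the paper spells out but are consistent with its implicit reliance on standard semi-infinite programming theory referenced elsewhere in the text.
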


\begin{proposition}\label{T:diff_P}
Let~$\blambda^\star$ be a solution of~\eqref{P:dual_parametrized}. Under Assumptions~\ref{A:convexity} and~\ref{A:slater}, it holds that~$\nabla_{\bs} P^\star_\text{Re}(\bs) \big\vert_{\bxi} = -\blambda^\star(\bxi)$.
\end{proposition}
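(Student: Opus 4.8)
The plan is to recognize this as a sensitivity (envelope-type) result for perturbed convex programs: the gradient of the optimal value with respect to the right-hand side perturbation equals the negative of the optimal dual multiplier. Here the "right-hand side" is the slack function $\bs$, so the statement is the functional analogue of the classical finite-dimensional fact $\nabla_b P^\star(b) = -\blambda^\star$. I would organize the argument in three steps.

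First, I would establish that $P^\star_\text{Re}(\bs)$, as a function of $\bs \in L_2^m$, is convex. This follows because $P^\star_\text{Re}(\bs) = \min_{\bz} \max_{\blambda \geq 0} \calL(\bz,\blambda,\bs)$ and, for fixed $\bz$ and $\blambda$, the Lagrangian $\calL(\bz,\blambda,\bs) = J(\bz) + \int_\calK \blambda^T[\bg(\bz,\bxi) - \bs(\bxi)]\,d\bxi$ is affine in $\bs$; a pointwise infimum of concave-maximized affine functions is convex. Convexity is what makes the (sub)gradient well-defined and lets me upgrade a one-sided variational inequality into the exact gradient once I show the subdifferential is a singleton.

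Second, I would compute the directional derivative. Fix a solution $\blambda^\star$ of~\eqref{P:dual_parametrized} and, by Proposition~\ref{T:zdg}(i), the corresponding primal solution $\bz^\star_\text{Re}(\bs)$. For any perturbation direction $\bh \in L_2^m$, strong duality (Proposition~\ref{T:zdg}, guaranteed by Assumptions~\ref{A:convexity} and~\ref{A:slater}) gives $P^\star_\text{Re}(\bs + t\bh) = \max_{\blambda \geq 0}\min_{\bz}\, \calL(\bz,\blambda,\bs+t\bh)$. Using the saddle function, standard perturbation analysis (e.g., Danskin-type / envelope arguments, or simply bracketing: $\calL(\bz^\star_\text{Re}(\bs+t\bh),\blambda^\star,\bs+t\bh) \geq P^\star_\text{Re}(\bs+t\bh) \geq \calL(\bz^\star_\text{Re}(\bs+t\bh),\blambda^\star(\bs+t\bh),\bs) - t\!\int \blambda^\star(\bs+t\bh)^T\bh$ and its mirror) yields
\[
\lim_{t \to 0^+} \frac{P^\star_\text{Re}(\bs + t\bh) - P^\star_\text{Re}(\bs)}{t} = -\int_\calK \blambda^\star(\bxi)^T \bh(\bxi)\, d\bxi,
\]
provided the dual solution is unique, or the analogous $\min/\max$ over the solution set otherwise. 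By strong convexity of $J$ the primal solution is unique, and I would argue (or assume via the remark preceding the proposition) that the relevant KKT system pins down $\blambda^\star$ uniquely as well, so the Hadamard directional derivative is the linear functional $\bh \mapsto -\langle \blambda^\star, \bh\rangle_{L_2}$. Identifying this with the Riesz representative gives $\nabla_{\bs} P^\star_\text{Re}(\bs)\big\vert_{\bxi} = -\blambda^\star(\bxi)$.

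Third, I would tie up the regularity: since $P^\star_\text{Re}$ is convex and its one-sided directional derivatives are linear and continuous in $\bh$, the function is Gâteaux differentiable with the stated gradient. The main obstacle I anticipate is the infinite-dimensional bookkeeping — justifying the interchange of limit and integral in the bracketing inequalities (which needs the Lipschitz-in-$\bxi$, $L_2$ hypotheses on $g_i$ and $s_i$, plus boundedness of $\blambda^\star$ on $\calK$, to dominate) and ensuring the dual optimizer varies continuously enough for the squeeze to close. In the finite-$\calK$ case this reduces to the textbook result (Bertsekas, \cite{Bertsekas09c}) and is immediate; the semi-infinite case is where care is required, but the convexity established in the first step means I only need the directional derivative from one side plus a matching upper bound, which the weak duality inequality supplies cheaply.
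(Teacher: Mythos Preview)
Your argument is essentially correct, but it takes a genuinely different route from the paper's own proof. The paper dispatches the proposition in two lines by invoking a known parametric-programming sensitivity theorem (Shapiro, \cite[Thm.~3.2]{Shapiro95d}) and then checking the one non-trivial hypothesis that theorem requires: \emph{inf-compactness} of the solution set of~\eqref{P:parametrized}. That check is where Assumption~\ref{A:convexity} is actually used --- the coercivity (radial unboundedness) and continuity of the $g_i$ force the feasible set to be bounded and closed, hence compact, which is exactly what Shapiro's result needs. Your proof, by contrast, reconstructs the sensitivity result from scratch: you establish convexity of $\bs \mapsto P^\star_\text{Re}(\bs)$, bracket the difference quotient using the saddle-point/strong-duality structure, and then upgrade to G\^ateaux differentiability once the directional derivative is shown to be linear. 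This is the standard envelope/Danskin machinery that underlies Shapiro's theorem, so you are effectively re-deriving what the paper cites.

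What each approach buys: the paper's proof is short and highlights the single structural ingredient (coercivity $\Rightarrow$ inf-compactness) that makes the black-box theorem apply; yours is self-contained and makes explicit where strong duality, primal uniqueness (from strong convexity of $J$), and dual uniqueness each enter. The one place where your write-up is looser than the paper's is that you never invoke coercivity of the $g_i$ --- in your bracketing step you implicitly need the perturbed problems $P^\star_\text{Re}(\bs+t\bh)$ to have minimizers that stay in a compact set as $t \to 0$, and that is precisely what inf-compactness delivers. Conversely, the paper is silent on the dual-uniqueness issue you flag; both treatments are tacitly assuming the multiplier is unique so that a gradient (rather than a subdifferential) exists.
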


\begin{proof}
This is a direct consequence of~\cite[Thm.~3.2]{Shapiro95d}. The only non-trivial condition is that the solution set of~\eqref{P:parametrized} is \emph{inf-compact}. This stems from the fact that the~$g_i$ are radially unbounded and continuous, in which case the feasible set of~\eqref{P:parametrized} is respectively bounded and closed.
\end{proof}

Having established these duality results, we now introduce a method to design resilient behavior based on compromising between control performance and requirement violations.

\section{RESILIENCE BY COMPROMISE}
	\label{S:resilience_theory}

While straightforward and tractable, the resilient optimal control problem~\eqref{P:parametrized} can lead to a multitude of behaviors, not all of them useful, depending on the choice of slacks. In this section, we take a compromise approach to designing resilient behavior by balancing the control performance~$P^\star_\text{Re}(\bs)$ resulting from the violations~$\bs$ and a measure of the magnitude of this violation.

The rationale behind this compromise is that even after adapting to a disruption, the behavior of the resilient system should remain similar to that of the undisturbed one in at least some aspects. If the specifications of the original problem must be completely replaced, then it was most likely ill-posed to begin with. Still, regardless of the disruption caused by~$\bxi$, increasing violations always improves the control performance. Indeed, $P^\star_\text{Re}$ is a non-increasing function of~$\bs$ in the sense that since the feasible set of~\eqref{P:parametrized} with slacks~$\bs^\prime$ is contained in that of~\eqref{P:parametrized} with slacks~$\bs \preceq \bs^\prime$, it immediately holds that~$P^\star_\text{Re}(\bs^\prime) \leq P^\star_\text{Re}(\bs)$.

Hence, all resilient systems must strike a balance between violating requirements to remain operational~(or improve their performance) and stay close to the original specifications. This balance is naturally mediated by the likelihood of the violation occurring, i.e., on the probability of the operating conditions~$\bxi$, in the sense that larger deviations of the original problem are allowed for less likely disruptions.

Explicitly, associate to each relaxation~$\bs$ a scalar violation cost~$h(\bs)$. Then, the specification~$\bs^\star$ is compromise-resilient if any further requirement violations would improve performance~(reduce control cost) as much as it would increase the violation cost, i.e.,
\begin{equation}\label{E:resilient}
	\nabla P^\star_\text{Re}(\bs) \big\vert_{\bs^\star,\, \bxi}
		= -\nabla h(\bs^\star(\bxi)) f_{\bXi}(\bxi)
		\text{,}
\end{equation}
where~$\nabla h$ is the gradient of~$h$. Without loss of generality, we assume~$h(\zeros) = \zeros$. The existence of the derivative of the optimal value function~$P^\star_\text{Re}$ obtains from Proposition~\ref{T:diff_P}.

\begin{assumption}\label{A:h}
The cost~$h$ is a twice differentiable, strongly convex function.
\end{assumption}

Observe that~$\bs^\star$ need not vanish even if~\eqref{P:parametrized} is feasible for~$\bs \equiv \zeros$. Hence, contrary to robustness from~\eqref{P:robust}, a compromise-resilient system may violate the original requirements even for mild disturbances that would not, in principle, warrant it. Nevertheless, whenever it does, it does so in a controlled and parsimonious manner.

Though obtaining a solution of~\eqref{P:parametrized} under the resilient equilibrium~\eqref{E:resilient} may appear challenging, it is in fact straightforward since it is equivalent to a convex optimization problem~(Section~\ref{S:equivalent_problem}). Hence, the balance~\eqref{E:resilient} induces relaxations that explicitly minimize the expected violation cost. Still, this does not characterize the resilient behavior resulting from~\eqref{E:resilient}. We therefore proceed to quantify the effect of the operational conditions~$\bxi$ on resilient behavior~$\bs$, showing that it identifies and relaxes requirements that are harder to satisfy under each disruption. To conclude, we construct a cost such that the resilience-by-compromise solution from~\eqref{E:resilient} is also a solution of the robust control problem~\eqref{P:robust}. Hence, resilience and robustness effectively optimize different objectives and may, in many applications, both be desired properties.

\subsection{Inverse optimality of resilience by compromise}
	\label{S:equivalent_problem}

Consider the optimization problem
\begin{prob}\label{P:resilient}
	P^\star_\text{Re} = \min_{\substack{\bz \in \setR^p\\ s_i \in L_2^+}}&
		&&J(\bz) + \E\left[ h\big( \bs(\bXi) \big) \right]
	\\
	\subjectto& &&g_i(\bz, \bxi) \leq s_i(\bxi)
		\text{,} \quad \text{for all } \bxi \in \calK
		\text{,}
	\\
	&&&\qquad i = 1,\dots,m
		\text{,}
\end{prob}
where the expectation is taken with respect to the distribution of the random variable~$\bXi$. The solution of~\eqref{P:resilient} is the same as the modified problem~\eqref{P:parametrized} with slacks satisfying the resilient equilibrium~\eqref{E:resilient}.

\begin{proposition}\label{T:equivalent_problem}
Let~$(\bz_\text{Re}^\star,\bs^\star)$ be the solution of~\eqref{P:resilient}. Then, $P^\star_\text{Re} = P^\star_\text{Re}(\bs^\star)$ and~$\bs^\star$ are the unique slacks that satisfy the equilibrium~\eqref{E:resilient}.
\end{proposition}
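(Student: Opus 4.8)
The plan is to establish the equivalence between problem~\eqref{P:resilient} and problem~\eqref{P:parametrized} under the equilibrium condition~\eqref{E:resilient} by showing the two share the same KKT system, then invoke convexity to conclude that solving one is the same as solving the other. First I would observe that~\eqref{P:resilient} is jointly convex in~$(\bz, \bs)$: $J$ is strongly convex in~$\bz$ by Assumption~\ref{A:convexity}, $h$ is strongly convex by Assumption~\ref{A:h} so~$\E[h(\bs(\bXi))]$ is strongly convex in~$\bs$ (being an average of strongly convex functions evaluated pointwise, with~$f_{\bXi}$ as a positive weight a.e.\ on the support), and the constraints~$g_i(\bz, \bxi) - s_i(\bxi) \leq 0$ are convex in~$(\bz, \bs)$ since~$g_i(\cdot, \bxi)$ is convex and~$s_i$ enters linearly. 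Slater's condition (Assumption~\ref{A:slater}) carries over: the point~$(\bzb, \bs \equiv \zeros)$ is strictly feasible since~$\bg(\bzb, \bxi) < \zeros$. Hence strong duality holds and the KKT conditions are necessary and sufficient for optimality of~\eqref{P:resilient}.

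Next I would write out those KKT conditions. Associating multipliers~$\mu_i \in L_2^+$ with the constraints, stationarity in~$\bz$ gives~$\nabla J(\bz) + \int_\calK \sum_i \mu_i(\bxi) \nabla_{\bz} g_i(\bz, \bxi)\, d\bxi = \zeros$, which is exactly~\eqref{E:kkt_grad} for~\eqref{P:parametrized} with~$\blambda = \bmu$. Stationarity in~$s_i$ at each~$\bxi$ gives~$\nabla h(\bs(\bxi)) f_{\bXi}(\bxi) - \bmu(\bxi) = \zeros$, i.e., $\bmu(\bxi) = \nabla h(\bs(\bxi)) f_{\bXi}(\bxi)$ — here I use that the gradient of the functional~$\bs \mapsto \E[h(\bs(\bXi))] = \int_\calK h(\bs(\bxi)) f_{\bXi}(\bxi)\, d\bxi$ at~$\bxi$ is~$\nabla h(\bs(\bxi)) f_{\bXi}(\bxi)$. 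Complementary slackness~$\mu_i(\bxi)[g_i(\bz,\bxi) - s_i(\bxi)] = 0$ is precisely~\eqref{E:kkt_slackness}. Now the key move: the multiplier~$\bmu$ from the KKT system of~\eqref{P:resilient} is a feasible point for the dual of~\eqref{P:parametrized} (with the given~$\bs^\star$), and conditions~\eqref{E:kkt_grad}–\eqref{E:kkt_slackness} hold, so by Proposition~\ref{T:zdg}(ii), $\bz_\text{Re}^\star$ solves~\eqref{P:parametrized} with slacks~$\bs^\star$ and~$\bmu = \blambda^\star(\bs^\star)$ is a dual solution; thus~$P^\star_\text{Re} = P^\star_\text{Re}(\bs^\star)$ (modulo the additive~$\E[h(\bs^\star(\bXi))]$ term, which I should track carefully in the statement's bookkeeping). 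Combining~$\bmu(\bxi) = \nabla h(\bs^\star(\bxi)) f_{\bXi}(\bxi)$ with Proposition~\ref{T:diff_P}, which says~$\nabla_{\bs} P^\star_\text{Re}(\bs)\big\vert_{\bxi} = -\blambda^\star(\bxi) = -\bmu(\bxi)$, yields~$\nabla P^\star_\text{Re}(\bs)\big\vert_{\bs^\star,\bxi} = -\nabla h(\bs^\star(\bxi)) f_{\bXi}(\bxi)$, which is exactly the equilibrium~\eqref{E:resilient}.

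For uniqueness, I would argue that~\eqref{P:resilient} has a unique solution: $J + \E[h(\bs(\bXi))]$ is strongly convex in~$(\bz,\bs)$ jointly (strong convexity in each block plus convexity of the feasible set suffices to make the minimizer unique), so~$(\bz_\text{Re}^\star, \bs^\star)$ is unique, and hence~$\bs^\star$ is the unique relaxation satisfying~\eqref{E:resilient}; conversely, any~$\bs$ satisfying~\eqref{E:resilient} together with the corresponding~$\bz_\text{Re}^\star(\bs)$ reconstructs a KKT point of~\eqref{P:resilient}, which must coincide with~$(\bz_\text{Re}^\star, \bs^\star)$.

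The main obstacle I expect is the functional-analytic care needed in the~$L_2$ setting: justifying that the Lagrangian stationarity in the infinite-dimensional slack variable~$\bs$ holds pointwise a.e.\ (rather than only in a weak/distributional sense), that the subdifferential of the integral functional~$\E[h(\bs(\bXi))]$ is indeed the pointwise gradient~$\nabla h(\bs(\cdot)) f_{\bXi}(\cdot)$ (an interchange-of-differentiation-and-integration argument, using the growth/continuity of~$\nabla h$ from Assumption~\ref{A:h} and~$s_i \in L_2$), and that strong duality and the sufficiency of KKT genuinely apply in this semi-infinite/infinite-dimensional convex program. Much of this is standard (and the paper already invokes~\cite{Bertsekas09c, Shapiro95d} for analogous facts), so I would cite those results rather than reprove them, but I would be explicit about checking their hypotheses — particularly that~$f_{\bXi} \in L_2$ or at least that~$\nabla h(\bs(\cdot)) f_{\bXi}(\cdot) \in L_2$ so the multiplier lives in the right space.
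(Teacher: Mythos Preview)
Your proposal is correct and follows essentially the same route as the paper: form the Lagrangian of~\eqref{P:resilient}, split the KKT stationarity into a $\bz$-block (which reproduces the KKT system of~\eqref{P:parametrized} at~$\bs^\star$ via Proposition~\ref{T:zdg}(ii)) and an $\bs$-block (which, combined with Proposition~\ref{T:diff_P}, yields the equilibrium~\eqref{E:resilient}), and note that the converse holds because the two KKT systems coincide. You are in fact more careful than the paper in two places---you keep the density~$f_{\bXi}$ in the $\bs$-stationarity condition (the paper's displayed equation~\eqref{E:kkt_resilient} drops it), and you supply the uniqueness argument from joint strong convexity that the paper leaves implicit---and your flag about the additive~$\E[h(\bs^\star(\bXi))]$ term in the identity~$P^\star_\text{Re} = P^\star_\text{Re}(\bs^\star)$ is well taken.
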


\begin{proof}
To show~\eqref{P:resilient} is equivalent solving~\eqref{P:parametrized} subject to the compromise~\eqref{E:resilient}, we leverage the fact that the KKT conditions in Proposition~\ref{T:zdg}(ii) are necessary and sufficient for convex programs under Assumption~\ref{A:slater}.

Start by defining the Lagrangian of~\eqref{P:resilient} as
\begin{equation}\label{E:lagrangian_resilient}
\begin{aligned}
	\calL^\prime((\bz,\bs), \bmu)
		&= f_0(\bz) + \E\left[ h\big( \bs(\bXi) \big) \right]
	\\
	{}&+ \int_\calK \bmu(\bxi)^T \big[ \bg(\bz, \bxi) - \bs(\bxi) \big] d\bxi
		\text{,}
\end{aligned}
\end{equation}
where we write~$(\bz,\bs)$ to emphasize that they are both primal variables of~\eqref{P:resilient} as opposed to~\eqref{P:parametrized} in which~$\bz$ is an optimization variable and~$\bs$ is a parameter.

From Proposition~\ref{T:zdg}(ii), if~$(\bz_\text{Re}^\star,\bs^\star)$ is a solution of~\eqref{P:resilient}, then there exists~$\bmu^\star$ such that~$\nabla \calL^\prime((\bz_\text{Re}^\star,\bs^\star), \bmu^\star) = \zeros$ and~$[\bmu^\star(\bxi)]_i \left[ g_i(\bz_\text{Re}^\star,\bxi) - s_i(\bxi) \right] = 0$, for all~$\bxi \in \calK$. Separating the elements of the gradient of~\eqref{E:lagrangian_resilient} for~$\bz$ and~$\bs$, its KKT conditions become
\begin{equation}\label{E:kkt_resilient}
	\nabla_{\bz} \calL(\bz_\text{Re}^\star, \bmu^\star, \bs^\star) = \zeros
	\text{ and }
	\nabla h \big( \bs^\star(\bxi) \big) - \bmu^\star(\bxi) = \zeros
		\text{,}
\end{equation}
where~$\calL$ is the Lagrangian~\eqref{E:lagrangian} of~\eqref{P:parametrized} with slacks~$\bs^\star$. The first equation in~\eqref{E:kkt_resilient} shows that~$\bz_\text{Re}^\star$ is also a solution of~\eqref{P:parametrized} for the slacks~$\bs^\star$. Using Proposition~\ref{T:diff_P}, the second equation shows that~$\bs^\star$ satisfies the equilibrium~\eqref{E:resilient}. The reverse relation holds directly, since the KKT conditions of both problems are actually identical.
\end{proof}

Proposition~\ref{T:equivalent_problem} shows that under the resilience equilibrium~\eqref{E:resilient}, \eqref{P:parametrized} optimizes both the control performance function~$J$ and the expected requirement violation cost. In other words, though the resilient formulation may violate the requirements for most states of the world, it does so in a parsimonious manner.

It is worth noting that relaxing constraints as in~\eqref{P:resilient} is common in convex programming and is used, for instance, in phase~1 solvers for interior-point methods~\cite{Bertsekas15c}. The goal in~\eqref{P:resilient}, however, is notably different. Indeed, resilience does not seek a solution~$\bz^\dagger$ for which the slacks~$\bs(\bxi)$ vanish for all~$\bxi$. Its aim is to adapt to situations in which disruptions are so extreme that only by modifying the underlying control problem is it possible to remain operational. Hence, it seeks~$\bs \succ \zeros$ for some, if not all, disruptions~$\bxi$.

Another consequence of Proposition~\ref{T:equivalent_problem} is that the compromise-resilient control problem~\eqref{P:parametrized}--\eqref{E:resilient} has a straightforward solution since it is equivalent to a convex optimization program, namely~\eqref{P:resilient}. Nevertheless, it turns out that a more efficient algorithm can be obtained by understanding how resilience violates the requirements to respond to disruptions. That is the topic of the next section.

\subsection{Quantifying the effect of disturbances}
	\label{S:counterfactual}

Proposition~\ref{T:equivalent_problem} shows that resilient control minimizes the problem modifications through the cost~$h$. In constrast, the following proposition explicitly describes the effect of a disturbance~$\bxi$ on the violations~$\bs$.

\begin{proposition}\label{T:slacks}
	Let~$\bz_\text{Re}^\star(\bs^\star)$ be the solution of~\eqref{P:parametrized} for the resilient slacks~$\bs^\star$ from~\eqref{E:resilient} and~$\blambda^\star(\bs^\star)$ be the solution of its dual problem~\eqref{P:dual_parametrized}. Then,
	\begin{equation}\label{E:slacks}
		\bs^\star = \left( \nabla h \right)^{-1} \left[ \frac{\blambda^\star(\bs^\star)}{f_{\bXi}} \right]
			\text{.}
	\end{equation}
	\vspace{-4pt}
\end{proposition}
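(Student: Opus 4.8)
The plan is to combine the value‑function sensitivity from Proposition~\ref{T:diff_P} with the defining resilience equilibrium~\eqref{E:resilient}, and then invert the strictly monotone gradient~$\nabla h$. Concretely, under Assumptions~\ref{A:convexity} and~\ref{A:slater} Proposition~\ref{T:diff_P} gives that~$P^\star_\text{Re}$ is differentiable in~$\bs$ with~$\nabla_{\bs} P^\star_\text{Re}(\bs)\big\vert_{\bs^\star,\bxi} = -\blambda^\star(\bs^\star)(\bxi)$, where~$\blambda^\star(\bs^\star)$ solves the dual~\eqref{P:dual_parametrized} at~$\bs = \bs^\star$. Since~$\bs^\star$ satisfies~\eqref{E:resilient}, equating the two expressions for this gradient yields, pointwise in~$\bxi$, $-\blambda^\star(\bs^\star)(\bxi) = -\nabla h\big(\bs^\star(\bxi)\big) f_{\bXi}(\bxi)$, that is~$\nabla h\big(\bs^\star(\bxi)\big) = \blambda^\star(\bs^\star)(\bxi)/f_{\bXi}(\bxi)$.

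The second step is to invert~$\nabla h$. By Assumption~\ref{A:h} the cost~$h$ is strongly convex and (twice) differentiable, so~$\nabla h$ is continuous and strictly monotone, hence injective, and its inverse~$(\nabla h)^{-1}$ (equivalently, the gradient of the Fenchel conjugate of~$h$) is well defined and single‑valued on its range. Applying~$(\nabla h)^{-1}$ to both sides of the identity just obtained gives precisely~\eqref{E:slacks}. Uniqueness of~$\bs^\star$ is then immediate from the injectivity of~$\nabla h$, and is in any case already guaranteed by Proposition~\ref{T:equivalent_problem}.

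I do not expect a genuinely hard step here; the proof is essentially a substitution followed by an inversion. The only two points that need a word of care are: (i) the global invertibility of~$\nabla h$, which is exactly what strong convexity in Assumption~\ref{A:h} buys us; and (ii) the behaviour of~\eqref{E:slacks} where~$f_{\bXi}(\bxi) = 0$. For the latter, note that on the complement of the support of~$\bXi$ the equilibrium~\eqref{E:resilient} forces~$\blambda^\star(\bs^\star)(\bxi)=0$ as well, and the objective of~\eqref{P:resilient} only weighs~$\bs$ through~$\E[h(\bs(\bXi))]$; hence~\eqref{E:slacks} is to be read as an identity~$\mathfrak{p}$-almost everywhere, which is all that is required. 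A fully parallel derivation can be carried out from the stationarity condition~\eqref{E:kkt_resilient} appearing in the proof of Proposition~\ref{T:equivalent_problem}, after identifying its multiplier with~$\blambda^\star(\bs^\star)$ through Proposition~\ref{T:zdg}(ii); I would cite this as a consistency check rather than write it out in full.
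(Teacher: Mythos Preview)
Your proposal is correct and follows essentially the same approach as the paper: substitute the sensitivity identity from Proposition~\ref{T:diff_P} into the equilibrium~\eqref{E:resilient} to get~$\blambda^\star(\bs^\star) = \nabla h(\bs^\star) f_{\bXi}$, then invert~$\nabla h$ using strong convexity from Assumption~\ref{A:h}. The paper phrases the invertibility step via the inverse function theorem (positive definite Hessian) rather than strict monotonicity, but these are equivalent justifications; your additional remarks on the null set of~$f_{\bXi}$ and the alternative route through~\eqref{E:kkt_resilient} are sound extras the paper does not include.
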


\begin{proof}
Follows by applying Proposition~\ref{T:diff_P} to the equilibrium~\eqref{E:resilient} to obtain~$\blambda^\star(\bs^\star) = \nabla h(\bs^\star) f_{\bXi}$. Recall that the Jacobian of the gradient~$\nabla h$ is the Hessian~$\nabla^2 h$ and that since~$h$ is strongly convex~(Assumption~\ref{A:h}), it holds that~$\nabla^2 h \succ 0$. Immediately, the inverse of the gradient exists by the inverse function theorem, yielding~\eqref{E:slacks}.
\end{proof}

Proposition~\ref{T:slacks} establishes a fixed point relation between the resilient slacks~$\bs^\star$ and the optimal dual variables~$\blambda^\star(\bs)$. This is not surprising in view of the well-known sensitivity interpretation of dual variables for convex programs. Indeed, dual variable represent how much the objective stands to change if a constraint were relaxed or tightened. Given the monotone increasing nature of~$\nabla h$~(due to the strong convexity of~$h$, Assumption~\ref{A:h}), it is clear from~\eqref{E:slacks} that the resilient formulation identifies and relaxes constraints that are harder to satisfy. Hence, if a disruption~$\bxi$ makes it difficult for the resilient system to meet a requirement, it will modify that requirement according to its difficulty. This change is mediated by the variation in the resilience cost~$h$ and the likelihood of the disruption~$f_{\bXi}(\bxi)$, which determine the amount by which the requirement is relaxed.

The choice of~$h$ therefore plays an important role in the resulting resilient behavior. For instance, if the violation cost is linear, i.e., $h(\bs) = \bgamma^T \bs$, $\bgamma \in \setR_+^m$, the equilibrium~\eqref{E:resilient} occurs for~$[\bs^\star]_i = [\bgamma]_i^{-1}$. Hence, the violations are independent of the disruptions and the solution is the same as if~\eqref{P:parametrized} were solved for predetermined slacks. A more interesting phenomenon occurs for quadratic cost structures, e.g., $h(\bs) = \bs^T \bGamma \bs$, for~$\bGamma \succ 0$. Then, the violations are proportional to the dual variables as in~$\bs^\star = \bGamma^{-1} \blambda^\star(\bs^\star) / f_{\bXi}$. In this case, the resilient violations are proportional to the requirement difficulty and inversely proportional to the likelihood of the disruption.

Given this wide range of resilient behaviors, a question that arises is how they relate to those induced by the robust formulation. We explore this question in the sequel by relating the resilient control problem~\eqref{P:resilient} to its robust counterpart~\eqref{P:robust}.

\subsection{Resilience vs.\ robustness}
	\label{S:resilient_vs_robust}

On the surface, the robust~\eqref{P:robust} and resilient~\eqref{P:resilient} control problems are strikingly different. And in fact, it is clear from the discussion in the previous section that depending on the choice of~$h$, their behaviors can be quite dissimilar. Yet, it turns out that~\eqref{P:robust} and~\eqref{P:resilient} are equivalent under mild conditions for an appropriate choice of~$h$, as shown in the following proposition.

\begin{proposition}\label{T:resilient_v_robust}
Let~$\bz_\text{Re}^\dagger$ be a solution of~\eqref{P:resilient} with~$h_\text{Ro}(\bs) = -\gamma \prod_{i=1}^m \big( 1 - \mathbb{H}(s_i) \big)$, where~$\mathbb{H}$ is the Heaviside function, i.e., $\mathbb{H}(x) = 1$ if~$x \geq 0$ and zero otherwise. For each~$\gamma \geq 0$ there exists a~$\delta^\dagger \in [0,1]$ such that~$\bz_\text{Re}^\dagger$ is a solution of~\eqref{P:robust} with probability of failure~$\delta^\dagger$.
\end{proposition}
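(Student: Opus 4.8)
The plan is to show that the peculiar cost $h_\text{Ro}(\bs) = -\gamma \prod_{i=1}^m \big( 1 - \mathbb{H}(s_i) \big)$ turns the resilient problem \eqref{P:resilient} into an indicator-penalized version of the worst-case feasibility problem, and then to read off the probabilistic constraint of \eqref{P:robust} by a change of variable in $\gamma$. First I would unpack the cost: $\prod_{i=1}^m (1 - \mathbb{H}(s_i))$ equals $1$ exactly when $s_i < 0$ for all $i$ and $0$ otherwise, so on the feasible set $s_i \in L_2^+$ (where $s_i \geq 0$ a.e.) we have $1 - \mathbb{H}(s_i(\bxi)) = 0$ whenever $s_i(\bxi) > 0$ and $=1$ only on the set where $\bs(\bxi) = \zeros$. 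Hence $\E[h_\text{Ro}(\bs(\bXi))] = -\gamma \, \Pr\big[\bs(\bXi) = \zeros\big]$ (modulo the usual a.e.\ caveats, using that $\mathfrak p$ is absolutely continuous). Since at an optimum the active constraints are tight, $\bs^\star(\bxi) = \zeros$ exactly on the set of realizations for which $\bg(\bz_\text{Re}^\dagger, \bxi) \leq \zeros$ can be enforced with no slack; elsewhere the optimal slack is $\bs^\star(\bxi) = \max\{\bg(\bz,\bxi),\zeros\}$ because any strictly positive slack is free to shrink (the cost is flat in $s_i > 0$) down to the constraint boundary. Therefore \eqref{P:resilient} with $h_\text{Ro}$ reduces to
\begin{equation*}
	\min_{\bz \in \setR^p}\ J(\bz) - \gamma \, \Pr\big[\bg(\bz, \bXi) \leq \zeros\big]
		\text{.}
\end{equation*}

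Next I would argue the equivalence with \eqref{P:robust} via the standard correspondence between a penalized objective and a constrained one. Let $\bz_\text{Re}^\dagger$ solve the penalized problem for a given $\gamma \geq 0$, and set $\delta^\dagger = 1 - \Pr\big[\bg(\bz_\text{Re}^\dagger, \bXi) \leq \zeros\big]$, which lies in $[0,1]$ by construction. I claim $\bz_\text{Re}^\dagger$ solves \eqref{P:robust} with this $\delta^\dagger$. It is feasible for \eqref{P:robust} by the choice of $\delta^\dagger$. For optimality, suppose some $\bz'$ were \eqref{P:robust}-feasible with $J(\bz') < J(\bz_\text{Re}^\dagger)$; then $\Pr[\bg(\bz',\bXi) \leq \zeros] \geq 1 - \delta^\dagger = \Pr[\bg(\bz_\text{Re}^\dagger,\bXi)\leq\zeros]$, so
\begin{equation*}
	J(\bz') - \gamma \Pr[\bg(\bz',\bXi)\leq\zeros]
		< J(\bz_\text{Re}^\dagger) - \gamma \Pr[\bg(\bz_\text{Re}^\dagger,\bXi)\leq\zeros]
		\text{,}
\end{equation*}
contradicting optimality of $\bz_\text{Re}^\dagger$ in the penalized problem (here $\gamma \geq 0$ makes the inequality go the right way). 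This gives the result. For $\gamma = 0$ the penalty vanishes and one takes $\delta^\dagger = 1$ (no probabilistic requirement), which is consistent.

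The main obstacle is that $h_\text{Ro}$ violates Assumption~\ref{A:h} — it is neither differentiable nor strongly convex (in fact not convex), so Propositions~\ref{T:equivalent_problem}, \ref{T:slacks}, and the equilibrium characterization \eqref{E:resilient} do not literally apply, and one cannot invoke strong duality or the sensitivity formula of Proposition~\ref{T:diff_P}. I would handle this by working directly with \eqref{P:resilient} as a joint minimization over $(\bz,\bs)$ rather than through the dual/equilibrium machinery: the key structural facts I actually need are only (i) that for fixed $\bz$ the inner minimization over $\bs \in L_2^+$ of $\E[h_\text{Ro}(\bs(\bXi))]$ subject to $\bg(\bz,\bxi) \leq \bs(\bxi)$ is solved by pushing each $s_i(\bxi)$ to the boundary $\max\{g_i(\bz,\bxi),0\}$, collapsing the cost to $-\gamma \Pr[\bg(\bz,\bXi)\leq\zeros]$; and (ii) the elementary penalized-vs-constrained argument above. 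A secondary technical point is measurability and the a.e.\ identification of $\{\bxi : \bs^\star(\bxi) = \zeros\}$ with $\{\bxi : \bg(\bz_\text{Re}^\dagger,\bxi) \leq \zeros\}$, which follows from continuity of $\bg$ in $\bxi$ (Assumption~\ref{A:convexity}) and absolute continuity of $\mathfrak p$ so that the boundary set $\{g_i(\bz,\bxi) = 0\}$ carries no mass issues — or, more carefully, one simply notes both events differ only on $\{\bg(\bz_\text{Re}^\dagger,\bxi)\leq\zeros\}\setminus\{\bg(\bz_\text{Re}^\dagger,\bxi)<\zeros\}$ and the probability expression is insensitive to how the boundary is assigned. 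Existence of a minimizer $\bz_\text{Re}^\dagger$ itself should be argued from coercivity of $J$ (strong convexity) and boundedness of the penalty term, even though uniqueness is lost because the penalty is not convex.
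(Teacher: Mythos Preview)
Your overall strategy---collapse the inner minimization over~$\bs$ to obtain the penalized problem $\min_{\bz}\, J(\bz) - \gamma\,\Pr[\bg(\bz,\bXi)\leq\zeros]$, then run the standard penalty-versus-constraint comparison with $\delta^\dagger = 1 - \Pr[\bg(\bz_\text{Re}^\dagger,\bXi)\leq\zeros]$---is exactly the paper's argument, just with the $\bs$-elimination made explicit rather than carried out by plugging in the feasible pair $(\bz_\text{Ro}^\star,\bg(\bz_\text{Ro}^\star,\cdot))$.

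There is, however, a genuine slip in your first step. With the convention $\mathbb{H}(0)=1$ stated in the proposition, one has $1-\mathbb{H}(s_i)=0$ at $s_i=0$, not $1$ as you write. Consequently, on the cone $L_2^+$ the term $h_\text{Ro}(\bs(\bxi))$ vanishes identically (every $s_i\geq 0$ forces every factor to zero), so $\E[h_\text{Ro}(\bs(\bXi))]=0$ rather than $-\gamma\,\Pr[\bs(\bXi)=\zeros]$. Your reduced problem is nonetheless the right one, but only once the nonnegativity constraint on~$\bs$ is dropped---which is precisely what the paper does implicitly when it inserts the generally \emph{signed} choice $\bs=\bg(\bz_\text{Ro}^\star,\cdot)$ as a feasible point of~\eqref{P:resilient}. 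With $\bs$ unrestricted in sign, the pointwise optimal slack is $\bs(\bxi)=\bg(\bz,\bxi)$ and $\E[h_\text{Ro}(\bs(\bXi))]=-\gamma\,\Pr[\bg(\bz,\bXi)<\zeros]$; after that, your contradiction argument goes through verbatim and coincides with the paper's direct comparison $J(\bz_\text{Re}^\dagger)\leq J(\bz_\text{Ro}^\star)$. The strict-versus-nonstrict boundary issue you flag at the end is the only remaining wrinkle and is indeed harmless for the inequality chain. So the fix is one line: for this proposition, work with $s_i\in L_2$ rather than $L_2^+$, and correct the evaluation of $h_\text{Ro}$ at~$\zeros$.
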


\begin{proof}
Fix~$\gamma$ in the violation cost~$h_\text{Ro}$ defined in the hypothesis and let~$(\bz_\text{Re}^\dagger, \bs_\text{Re}^\dagger)$ be a solution pair of the resilience-by-compromise problem~\eqref{P:resilient} and~$\bz_\text{Ro}^\star$ be a solution of the robust~\eqref{P:robust} with~$1-\delta^\dagger = \Pr\left[ \bg(\bz_\text{Re}^\dagger, \bXi) \leq 0 \right]$.

Immediately, the value of~\eqref{P:resilient} is achieved for~$\bz = \bz_\text{Re}^\dagger$ and~$\bs = \bs_\text{Re}^\dagger$. What is more, note that the solution pair~$(\bz,\bs) = (\bz_\text{Ro}^\star, \bg(\bz_\text{Ro}^\star,\cdot))$ is trivially feasible for~\eqref{P:resilient} and can therefore be used to upper bound its value as in
\begin{equation}\label{E:rvr1}
\begin{aligned}
	P_\text{Re}^\star &= J(\bz_\text{Re}^\dagger)
		- \gamma \E\left[ \prod_{i = 1}^m
			\left( 1-\mathbb{H} \left( \big[ \bs_\text{Re}^\dagger(\bXi) \big]_i \right) \right)
		\right]
	\\	
	&\leq J(\bz_\text{Ro}^\star)
		- \gamma \E\left[ \prod_{i = 1}^m
			\bigg( 1-\mathbb{H} \left( g_i(\bz_\text{Ro}^\star, \bXi) \right) \bigg)
		\right]
		\text{.}
\end{aligned}
\end{equation}
Due to the form of~$\mathbb{H}$, the expectations in~\eqref{E:rvr1} reduce to probabilities. We then obtain
\begin{multline}\label{E:rvr2}
	J(\bz_\text{Re}^\dagger)
		- \gamma \Pr\left[ \bs^\dagger(\bXi) \leq \zeros \right]
	\\
	{}\leq J(\bz_\text{Ro}^\star)
		- \gamma \Pr\left[ \bg(\bz_\text{Ro}^\star, \bXi) \leq \zeros \right]
		\text{.}
\end{multline}
Since~$\bz_\text{Ro}^\star$ is a solution of~\eqref{P:robust} with probability of failure~$\delta^\dagger$, \eqref{E:rvr2} becomes
\begin{equation}\label{E:rvr3}
\begin{aligned}
	J(\bz_\text{Re}^\dagger)
		- \gamma \Pr\left[ \bs^\dagger(\bXi) \leq 0 \right]
	\leq J(\bz_\text{Ro}^\star)
		- \gamma (1-\delta^\dagger)
		\text{.}
\end{aligned}
\end{equation}

To conclude, recall from~\eqref{P:resilient} that~$\bg(\bz_\text{Re}^\dagger(\gamma), \bxi) \leq \bs_\text{Re}^\dagger(\bxi)$ for all~$\bxi \in \calK$, which by monotonicity of the Lebesgue integral implies that
\begin{equation*}
	\Pr\left[ \bs_\text{Re}^\dagger(\bXi) \leq 0 \right]
		\leq \Pr\left[ \bg(\bz_\text{Re}^\dagger, \bXi) \leq 0 \right]
		= 1-\delta^\dagger
		\text{.}
\end{equation*}
Hence, we obtain from~\eqref{E:rvr3} that~$J(\bz_\text{Re}^\dagger) \leq P_\text{Ro}^\star$. Since~$\bz_\text{Re}^\dagger$ is a feasible point of~\eqref{P:robust} with probability of failure~$\delta^\dagger$ by design and its control performance achieves the optimal value~$P_\text{Ro}^\star$, it must be a solution of~\eqref{P:robust}.
\end{proof}

Proposition~\ref{T:resilient_v_robust} gives conditions on the violation cost~$h$ such that a resilience-by-compromise controller behaves as a robust one. In particular, it states that there exists a fixed, strict violation cost, i.e., one that charges a fixed price only if some requirement is violated, such that resilience by compromise reduces to robustness. This cost essentially determines the level of control performance~$J$ above which the controller chooses to pay~$\gamma$ to give up on satisfying the requirements altogether. Notice that Proposition~\ref{T:resilient_v_robust} holds even though the resulting problem is not convex.

In that sense, resilience can be thought of as a soft version of robustness: whereas the violation magnitude matters for the former, only whether the requirement is violated impacts the latter. For certain critical requirements, this all-or-nothing behavior may be the only acceptable one. In these cases, constraints should be treated as robust with appropriate satisfaction levels. Other engineering requirements, however, are nominal in nature and can be relaxed as long as violations are small and short-lived. Treating these constraints as resilient enables the system to continue operating under disruptions while remaining robust with respect to critical specifications. For instance, if a set of essential requirements needs a level of satisfaction so high that the control problem becomes infeasible, nominal constraints can be adapted to recover a useful level of operation.

By leveraging Proposition~\ref{T:resilient_v_robust}, this can be achieved by posing a control problem that is both robust and resilient. To do so, let~$\calS \subseteq [m]$ be the set of soft~(nominal) requirements, i.e., those that can withstand relaxation, and~$\calH \subseteq [m]$ be the set of hard~(critical) requirements, i.e., those that cannot be violated under any circumstances. Naturally, $\calS \cap \calH = \emptyset$ and~$\calS \cup \calH = [m]$. We can then combine~\eqref{P:robust} and~\eqref{P:resilient} into a single problem, namely
\begin{prob}\label{P:complete}
	\minimize_{\substack{\bz \in \setR^p,\\s_i \in \calF}}&
		&&f_0(\bz) + \E\left[ \sum_{i \in \calS} h_i\big( s_i(\bXi) \big)
			+ \sum_{i \in \calH} h_\text{Ro}\big( s_i(\bXi) \big)\right]
	\\
	\subjectto& &&f_i(\bz, \bxi) \leq s_i(\bxi)
		\text{,} \ \ \forall \bxi \in \calK
		\text{, } i = 1,\dots,m
		\text{.}
\end{prob}

Whereas~\eqref{P:complete} provides a complete solution to designing robust/resilient systems, it is worth noting that it is not a convex optimization problem. What is more, the non-smooth nature of~$\mathbb{H}$ poses a definite challenge to even approximating its solution. Enabling the solution of this general problem is therefore beyond the scope of this paper. Nevertheless, we describe in the sequel an efficient algorithm to tackle resilience-by-compromise by directly solving~\eqref{P:parametrized} for the resilient equilibrium~\eqref{E:resilient}.

\section{A MODIFIED ARROW-HURWICZ ALGORITHM}
	\label{S:algorithm}

\begin{figure}[tb]
\centering
\includesvg[width=\columnwidth]{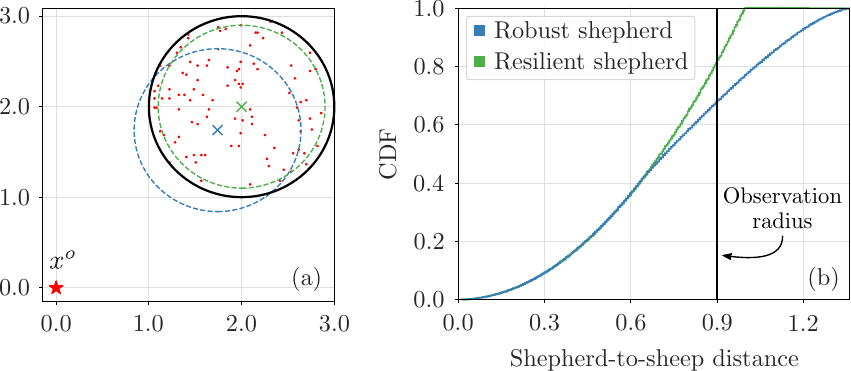}
\vspace{-14pt}
	\caption{Robust and resilient solution to the shepherd problem: (a)~Shepherd plans; (b)~distribution of maximum distance between shepherd and sheep.}
	\label{F:shepherd}
\vspace{-10pt}
\end{figure}

\begin{figure*}
\centering
\includesvg{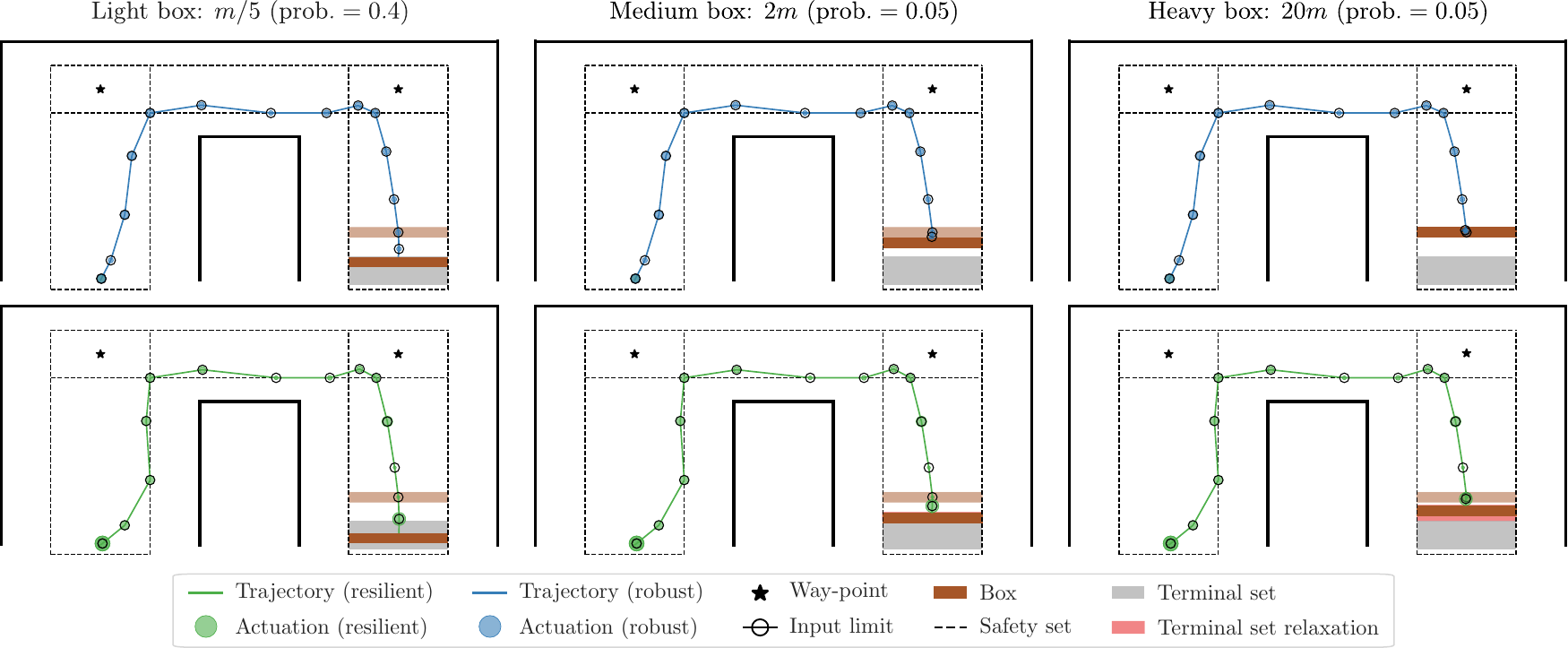}
\vspace{-14pt}
    \caption{Robust and resilient controllers for the quadrotor navigation problem. The radius of the markers are proportional to the actuation strength.}
    \label{F:navigation}
\vspace{-10pt}
\end{figure*}

In view of Proposition~\ref{T:equivalent_problem}, solving the resilient control problem~\eqref{P:parametrized} subject to the equilibrium~\eqref{E:resilient} reduces to obtaining a solution of~\eqref{P:resilient}. Given its a smooth, convex nature, this can be done using any of a myriad of methods~\cite{Bertsekas15c}. One approach that is particularly promising is to use a modified primal-dual algorithm that takes into account the results in Proposition~\ref{T:slacks}.

Explicitly, consider the classical Arrow-Hurwicz algorithm for solving~\eqref{P:parametrized}~\cite{Arrow58s}. This method seeks a points that satisfy the KKT conditions~[Proposition~\ref{T:zdg}(2)] by updating the primal and dual variables using gradients of the Lagrangian~\eqref{E:lagrangian}. Explicitly, $\bz$ is updated by \emph{descending} along the negative gradient of the Lagrangian, i.e.,
\begin{subequations}\label{E:arrow}
\begin{equation}\label{E:primal_arrow}
\begin{aligned}
	\dot{\bz} &= -\nabla_{\bz} \calL(\bz,\blambda,\bs)
	\\
	{}&= - \nabla_{\bz} J(\bz)
		- \int_{\calK} \blambda(\bxi)^T \nabla_{\bz} \bg(\bz,\bxi) d\bxi
		\text{,}
\end{aligned}
\end{equation}
and the dual variables~$\blambda$ are updated by \emph{ascending} along the gradient of the Lagrangian~$\nabla_{\lambda} \calL(\bz,\blambda,\bs)$ using the projected dynamics
\begin{equation}\label{E:dual_arrow}
\begin{aligned}
	\dot{\blambda}(\bxi) &= \Pi_+\big[ \blambda(\bxi),
		\nabla_{\blambda} \calL(\bz,\blambda,\bs) \vert_{\bxi} \big]
	\\
	{}&= \Pi_+\big[ \blambda(\bxi),\, \bg(\bz,\bxi) - \bs(\bxi) \big]
		\text{.}
\end{aligned}
\end{equation}
\end{subequations}
The projection~$\Pi_+$ is introduced to ensure that the Lagrange multipliers remain non-negative and is defined as
\begin{equation}\label{E:projection}
	\Pi_+(\bx,\, \bv) = \lim_{a \to 0} \frac{[\bx + a \bv]_+ - \bx}{a}
		\text{,}
\end{equation}
where~$[\bx]_+ = \argmin_{\by \in \setR_+^m} \norm{\by - \bx}$ is the projection onto the non-negative orthant~\cite{Nagurney12p}.

The main drawback of~\eqref{E:arrow} is that it solves~\eqref{P:parametrized} for a fixed slack~$\bs$ and the desired compromise~$\bs^\star$ in~\eqref{E:resilient} is not known \emph{a priori}. To overcome this limitation, we can use Proposition~\ref{T:slacks} and replace~\eqref{E:dual_arrow} by
\begin{equation}\label{E:alt_dual_arrow}
	\dot{\blambda}(\bxi) = \Pi_+\left[ \blambda(\bxi),\, \bg(\bz,\bxi)
		- \nabla h^{-1}\left( \frac{\blambda(\bxi)}{f_{\bXi}(\bxi)} \right)
	\right]
		\text{.}
\end{equation}
The dynamics~\eqref{E:primal_arrow}--\eqref{E:alt_dual_arrow} can be shown to converge to a point that satisfies the KKT conditions in Proposition~\ref{T:zdg}(2) as well as the equilibrium~\eqref{E:resilient} using an argument similar to~\cite{Cherukuri16a} that relies on classical results on projected dynamical systems~\cite[Thm.~2.5]{Nagurney12p} and the invariance principle for Carath\'{e}odory systems~\cite[Prop.~3]{Bacciotti06n}. Hence, they simultaneously solves three problems by obtaining (i)~requirement violations~$\bs^\star$ that satisfies~\eqref{E:resilient}, (ii)~the solution~$\bz^\star(\bs^\star)$ of~\eqref{P:parametrized} for the violations~$\bs^\star$, and~(iii)~dual variables~$\blambda^\star(\bs^\star)$ that solve~\eqref{P:dual_parametrized} for~$\bs^\star$. Due to space constraints, details of this proof are left for a future version of this work.

\section{NUMERICAL EXPERIMENTS}
	\label{S:sims}

In this section, we illustrate the use of resilient optimal control in two applications: \emph{the shepherd problem}, in which we plan a configuration in order to surveil targets~(Section~\ref{S:shepherd}), and \emph{navigation in partially known environments}, in which a quad-rotor must follow way-points to a target that is behind an obstruction of unknown mass~(Section~\ref{S:navigation}). We also illustrate an online extension of our resilience framework in which a quad-rotor adapts to wind gusts~(Section~\ref{S:online}). Due to space constraints, we only provide brief problem descriptions in the sequel. Details can be found in~\cite{extended}.

\subsection{The shepherd problem}
\label{S:shepherd}

We begin by illustrating the differences between robustness and resilience in a static surveillance planning problem. Suppose an agent~(\emph{the shepherd}) must position itself to supervise a set of targets~(\emph{the sheep}). Without prior knowledge of their position, the shepherd assumes the sheep are distributed uniformly at random within a perimeter of radius~$R$. The surveillance radius~$r$ of the shepherd is enough to cover only~$90\%$ of that area. The shepherd also seeks to minimize its displacement from its home situated at~$\bx^o$. If we let~$\bXi_i$ denote the position of the~$i$-th sheep and~$\calK$ be the ball described by the radius-$R$ perimeter, the robust formulation~\eqref{P:robust} becomes
\begin{prob}
	\minimize_{\bx}& &&\norm{\bx - \bx^o}^2
	\\
	\subjectto& &&\Pr\left[ \norm{\bx- \bXi_i}^2 \leq r^2 \right] \geq 1-\delta
\end{prob}
and the resilient problem~\eqref{P:resilient} yields
\begin{prob}
	\minimize_{\bx}& &&\norm{\bx - \bx^o}^2
		+ \E \left[ \sum_{i = 1}^m s_i^2(\bXi_i) \right]
	\\
	\subjectto& &&\norm{\bx- \bxi_i}^2 \leq r^2 + s_i(\bxi_i)
		\text{,} \quad \bxi_i \in \calK
		\text{.}
\end{prob}

Fig.~\ref{F:shepherd} show results for~$\delta = 0.2$. In order to meet the set probability of failure, the robust moves away from the origin only as much as necessary, leading to a plan that has lower cost than the resilient. The resilient solution, on the other hand, is willing to pay the extra cost to move to the center of the perimeter so that when a sheep steps out of its surveillance radius, it does not go too far~(Fig.~\ref{F:shepherd}b). This example illustrates the difference between robust and resilient planning. While the robust system saves on cost by minimally meeting the specified requirement violation, the resilient system takes into account the magnitude of the violations. Hence, it is willing to pay the extra cost in order to reduce future violations.

\subsection{Way-point navigation in a partially known environment}
\label{S:navigation}

A quadrotor of mass~$m$ must plan control actions to navigate the hallway shown in Fig.~\ref{F:navigation} by going close to the way-points~(stars) at specific instants while remaining within a safe distance of the walls and limiting the maximum input thrust. Between the quadrotor and its target, however, there may exist an obstruction of \emph{a priori} unknown mass~(brown box). This box modifies the dynamics of the quadrotor in a predictable way depending on its mass, i.e., the quadrotor can push the box by applying additional thrust but the magnitude of this thrust is not known beforehand. Since it is not possible to find a set of control actions that is feasible for all obstruction masses, we set~$\delta = 0.1$ for the robust controller. On the other hand, the resilient controller is allowed to relax both thrust limits and the terminal set. Hence, it can choose between actuating harder to push the box or deem it too heavy and stop before entering the room.

Notice that while the robust plan reaches the terminal set for the light obstruction, it is unable to do so in the other two cases. This is to be expected given that it was not designed to do so. The resilient controller, however, displays a smoother degradation as the weight of the obstruction increases. Notice that it chooses which requirement to violate by compromising between their satisfaction and the control objective~(LQR). While it violates the maximum thrust constraint enough to push the medium box almost into the terminal set, it deems the heavy box to not be worth the effort and relaxes the terminal set instead. This leads to a more graceful behavior degradation than the one induced by the robust controller. Moreover, observe that the resilient controller also uses additional actuation in the beginning to more quickly approach the wall and reduce the distance traveled. This is an example of the ``unnecessary yet beneficial'' requirement violations that resilient control may perform in order to improve the control performance. Naturally, if thrust requirements are imposed by hardware limitations, then the robust solution is the only practical one.

\begin{figure}
\centering
\includesvg{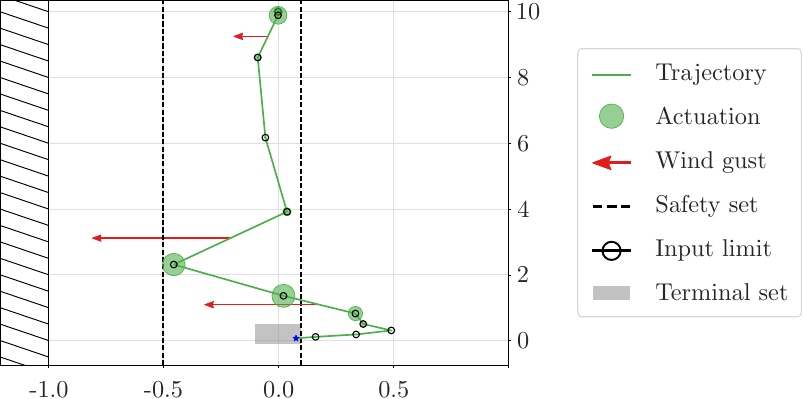}
	\caption{Online resilient control using MPC: quadrotor under wind disruption}
	\label{F:online}
\vspace{-12pt}
\end{figure}

\subsection{Online extension: adapting to wind gusts}
\label{S:online}

The previous examples illustrated the behavior of the resilient optimal control problem formulation introduced in this paper. Another aspect of resilience, beyond planning to mitigate disruptions, is the ability to adapt to disturbances as they occur. This can be achieved by using the resilient optimal control problem in an MPC fashion. We show the result of doing so in Fig.~\ref{F:online}. Here, a quadrotor navigates towards its target~(grey zone) by planning over a~$10$-steps horizon, but executing only the first control action. During this execution, the quadrotor may be hit by an unpredictable wind gust that pushes him towards a wall~(left of the diagram). The quadrotor takes the wind gust suffered into account in its future plan by assuming that the wind will continue to blow at that speed. The resilient controller is allowed to modify the safety set and maximum thrust requirements.

Similar behaviors to Fig.~\ref{F:navigation} can be observed. The resilient controller chooses to violate the thrust constraint in order to pick up speed initially. It does so because the price of using extra actuation is compensated by the improvement in control performance~(LQR). When a gust of wind pushes the quadrotor close to the left boundary of the safety set, it again violates the actuation constraints to stay withing the safe region. It does so in full view that it must now overshoot the safety region on the right. Notice that the resilient behavior of the quadrotor is adaptive: as disruptions occur, the controller plans which requirements should be violated to remain operational. Without these violations, such intense wind gusts would crash the quadrotor into the wall.

\section{CONCLUSION AND FUTURE WORK}

We defined resilient control by embedding control problems with the ability to violate requirements and proposed a method to automatically design these violations by compromising between the control objective and a constraint violation cost. We showed that such a compromise explicitly minimizes changes to the original control problem and that for properly selected costs, robust behaviors can be induced. These results are the first steps toward a resilient control solution capable of adapting to disruptions online. Such behavior can be achieved by combining~\eqref{P:resilient} and MPC as shown in Section~\ref{S:online}. Future works involve analyzing the stability of such solutions and leverage system level synthesis techniques~\cite{Anderson19s} to directly design resilient controllers.

\bibliographystyle{aux_files/IEEEbib}
\bibliography{aux_files/IEEEabrv,aux_files/af,aux_files/bayes,aux_files/control,aux_files/gsp,aux_files/math,aux_files/ml,aux_files/rkhs,aux_files/rl,aux_files/sp,aux_files/stat,aux_files/quadrotor}

\appendix
\section*{Quadrotor model}

\subsection{Dynamics}

In this section, we describe the linearized quadrotor dynamics used in the simulations of Sections~\ref{S:navigation} and~\ref{S:online}. The model is obtained as in~\cite{Beard08q, Sabatino15q} by approximating the quadrotor as a dense sphere of mass~$M$ and radius~$R$ together with four points masses~$m^\prime$ distributed a distance~$l$ from the center of the quadrotor. Hence, the total mass of the quadrotor is given by~$m = M + 4m^\prime$.

The state vector~$\bx \in \setR^{12}$ describes the position and velocities of the quadrotor. Explicitly, we let
\begin{equation}
	\bx = \vect{cccccccccccc}{x & y & z & \phi & \theta & \psi & u & v & w & p & q & r }^T
		\text{,}
\end{equation}
where~$(x,y,z)$ and~$(\phi,\theta,\psi)$ denote the linear and angular positions of the quadrotor, respectively, with respect to a \emph{world} frame oriented as in Fig.~\ref{F:orientation} and~$(u,v,w)$ and~$(p,q,r)$ denote linear and angular velocities in the \emph{body} frame. The control inputs collected in~$\bu \in \setR^4$ are the thrust and net torques along each axis, i.e.,
\begin{equation}
	\bu = \vect{cccc}{f_t & \tau_x & \tau_y & \tau_z}^T
		\text{,}
\end{equation}
which are controlled by adjusting the speed of each of the four propellers. The quadrotor may be operating in windy conditions described by an external disturbance that exert force and torque along each directional axis. This disturbance is represented by the vector~$\bw \in \setR^6$ defined as
\begin{equation}
	\bw = \vect{cccccc}{f_{wx} & f_{wy} & f_{wz} & \tau_{wx} & \tau_{wy} & \tau_{wz}}^T
		\text{,}
\end{equation}
where~$(f_{wx},f_{wy},f_{wz})$ are wind-generated and~$(\tau_{wx},\tau_{wy},\tau_{wz})$ are wind-generated torques along the axis of Fig.~\ref{F:orientation}.

The quadrotor dynamics are inherently non-linear, so we use a linearization around the point
\begin{align*}
	\bxb &= \vect{cccccccccccc}{\bar{x} & \bar{y} & \bar{z} & 0 & 0 & 0 &
		0 & 0 & 0 & 0 & 0 & 0}^T
	\\
	\bub &= \vect{cccc}{ mg & 0 & 0 & 0 }
	\\
	\bwb &= \vect{cccccc}{ 0 & 0 & 0 & 0 & 0 & 0 }
\end{align*}
to obtain the linear system:
\begin{equation}\label{E:dynamics_continuous}
	\dot{\bx} = \bA_c \bx+ \bB_c \bu + \bW_c \bw
\end{equation}
with
\begin{gather*}
	\bA_c = \begin{bmatrix}
		0& 0& 0& 0& 0& 0& 1& 0& 0& 0& 0& 0\\
		0& 0& 0& 0& 0& 0& 0& 1& 0& 0& 0& 0\\
		0& 0& 0& 0& 0& 0& 0& 0& 1& 0& 0& 0\\
		0& 0& 0& 0& 0& 0& 0& 0& 0& 1& 0& 0\\
		0& 0& 0& 0& 0& 0& 0& 0& 0& 0& 1& 0\\
		0& 0& 0& 0& 0& 0& 0& 0& 0& 0& 0& 1\\
		0& 0& 0& 0& -g& 0& 0& 0& 0& 0& 0& 0\\
		0& 0& 0& g& 0& 0& 0& 0& 0& 0& 0& 0\\
		0& 0& 0& 0& 0& 0& 0& 0& 0& 0& 0& 0\\
		0& 0& 0& 0& 0& 0& 0& 0& 0& 0& 0& 0\\
		0& 0& 0& 0& 0& 0& 0& 0& 0& 0& 0& 0\\
		0& 0& 0& 0& 0& 0& 0& 0& 0& 0& 0& 0
	\end{bmatrix}
    \\
	\bB_c = \begin{bmatrix}
		0& 0& 0& 0\\
		0& 0& 0& 0\\
		0& 0& 0& 0\\
		0& 0& 0& 0\\
		0& 0& 0& 0\\
		0& 0& 0& 0\\
		0& 0& 0& 0\\
		0& 0& 0& 0\\
		1/m& 0& 0& 0\\
		0& 1/I_x& 0& 0\\
		0& 0& 1/I_y& 0\\
		0& 0& 0& 1/I_z\\
	\end{bmatrix}
	\\
	\bW_c = \begin{bmatrix}
		0& 0& 0& 0& 0& 0\\
		0& 0& 0& 0& 0& 0\\
		0& 0& 0& 0& 0& 0\\
		0& 0& 0& 0& 0& 0\\
		0& 0& 0& 0& 0& 0\\
		0& 0& 0& 0& 0& 0\\
		1/m& 0& 0& 0& 0& 0\\
		0& 1 /m& 0& 0& 0& 0\\
		0& 0& 1/m& 0& 0& 0\\
		0& 0& 0& 1 /I_x& 0& 0\\
		0& 0& 0& 0& 1 /I_y& 0\\
		0& 0& 0& 0& 0& 1 /I_z
	\end{bmatrix}
\end{gather*}
where~$(I_x,I_y,I_z)$ are the moments of inertia along the~$x$, $y$, and~$z$ axes respectively.

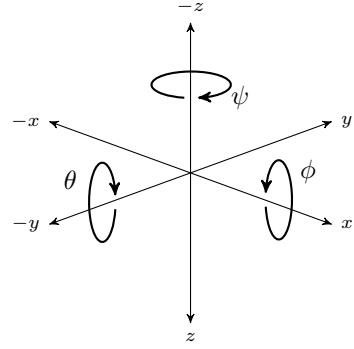
\begin{figure}[tb]
\centering
\begin{tikzpicture}
	[axis/.style={thin, black, ->, >=stealth'}]

	%AXIS
	\draw[axis] (0,0) -- (0,2) node [above, black] {\scriptsize $-z$};
	\draw[axis] (0,0) -- (0,-2) node [below, black] {\scriptsize $z$};
	\draw[axis] (0,0) -- (-20:2) node [right, black] {\scriptsize $x$};
	\draw[axis] (0,0) -- (160:2) node [left, black] {\scriptsize $-x$};
	\draw[axis] (0,0) -- (20:2) node [right, black] {\scriptsize $y$};
	\draw[axis] (0,0) -- (200:2) node [left, black] {\scriptsize $-y$};

	% ROTATIONS           
	\draw [thick, <-, >=stealth'] (0.1,1) arc (-80:260:15pt and 5pt) node [right=14pt, black] {$\psi$};
	\draw [thick, ->, >=stealth'] (1,-0.45) arc (-170:170:5pt and 15pt) node [right=16pt, above=0pt, black] {$\phi$};
	\draw [thick, <-, >=stealth'] (-1,-0.3) arc (10:350:5pt and 15pt) node [left=17pt, above=4pt, black] {$\theta$};
\end{tikzpicture}
	\caption{Orientations and rotations in the world frame.}
	\label{F:orientation}
\end{figure}

Finally, we discretize~\eqref{E:dynamics_continuous} using the sampling time~$T_s$ to obtain the discrete-time system:
\begin{equation}\label{E:dynamics_discrete}
	\bx_{k+1} = \bA \bx_{k} + \bB \bu_{k} + \bW \bw_{k}
		\text{,}
\end{equation}
for
\begin{align*}
	\bA &= e^{\bA_c T_s}
		\text{,}
	\\
	\bB &= \left(\int_{\tau = 0}^{T_s} e^{\bA_c\tau}d\tau \right) \bB_c
		\text{,}
	\\
	\bW &= \left(\int_{\tau = 0}^{T_s} e^{\bA_c\tau}d\tau \right) \bW_c
		\text{.}
\end{align*}

\subsection{Quadrotor parameters}

We use parameters that mimic the Ascending Technologies Hummingbird as in~\cite{Mueller14s}. The total mass of the quadrotor is~$m = 0.5$kg, the distance from center of mass to each propeller is~$0.17$m, and the angular moments are~$I_z = 5.5 \times 10^{-3} \text{kg m}^2$ and~$I_x = I_y = 3.2 \times 10^{-3} \text{kg m}^2$. In order to update these values when the mass of the quadrotor changes~(e.g., when pushing obstacles), we take~$R = 0.0812$m with mass~$M = 0.341$kg for sphere modeling the body and points masses of~$m^\prime = 0.0398$kg to model the propellers.

\section*{Experimental details}
\setcounter{subsection}{0}

\subsection{Way-point navigation in a partially known environment}

In this scenario the quadrotor navigates indoors, so we assume that there is no wind disturbance and take~$\bw_k = \zeros$ for all~$k$.
 
In this scenario, the quadrotor encounters an obstacle of mass~$\Delta$ at time step~$\ell$. We assume an inelastic collision with the stationary obstruction. Hence, iteration~$\ell-1$, the state propogation $\bx_{\ell} = \bA\bx_{\ell-1} + \bB\bu_{\ell-1}$ occurs fully and at iteration~$\ell$ the quadrotor mass instantaneously increases to~$m + \Delta$. By the conservation of momentum, we obtain
\begin{align*}
	u^{\Delta}_\ell &= \frac{m}{m + \Delta} u_\ell
	\\
	v^{\Delta}_\ell &= \frac{m}{m + \Delta} v_\ell
	\\
	w^{\Delta}_\ell &= \frac{m}{m + \Delta} w_\ell
\end{align*}
We assume that the new mass is added to the center of mass of the drone, i.e., $m^{\Delta} = m + \Delta$ and~$M^{\Delta} = M + \Delta$, allowing us to recompute its moments as
\begin{align*}
	I_x^{\Delta} &= I_y^{\Delta} = \frac{2MR^2}{5} + 2l^2m^\prime
		\text{,}
	\\
	I_z^{\Delta} &= \frac{2MR^2}{5} + 4l^2m^\prime
		\text{.}
\end{align*}
These changes affect the input matrix~$\bB$ in the dynamics~\eqref{E:dynamics_discrete}, which become~$\bB^{\Delta}$. We consider the potential changes described in Figure~\ref{F:navigation}, i.e., $\Delta = 0$ with probability~$0.5$, $\Delta = 0.1$kg with probability $0.4$, $\Delta = 1$kg with probability~$0.05$, and~$\Delta = 10$kg with probability~$0.05$.

When formulating our optimal control problems, we use the LQR objective as in~\eqref{P:lqr} with~$\bQ = \bI$, $\bR = \bI$, and for~$\bP$ the solution of the discrete algebraic Ricatti equation, i.e., the cost of the unconstrained infinite horizon LQR, as is typical for MPC problem~\cite{Borrelli17p}. We constrain the control inputs to be in the set~$\calU = \{\bu \in \setR^4 \mid \norm{\bu}_\infty \leq 0.005 \}$. So that our linearization yields a good approximation, we also consider the state constraint~$-\pi/9 \leq \phi,\theta \leq \pi/9$ and~$-\pi \leq \psi \leq \pi$. Additionally, we impose enforce the quadrotor to stay within a safety set so that it never flies closer to~$1$m from the walls of the hallway and at the reduced altitude range of~$4 \leq z \leq 6$m. We combine all these constraints in the set~$\calX$. The terminal set~$\calX_N$ follows the constraints already impose on the angular positions and additionally requires that all velocities~$(u,v,w,p,q,r)$ be within~$[-0.1,0.1]$ and that the linear position~$(x,y,z) \in [-0.1,1] \times [-0.1,0.5] \times [-0.1,0.1]$~(gray region in Fig.~\ref{F:navigation}). The quadrotor starts stationary at~$(x,y,z,\phi,\theta,\psi) = (0,-6,5,0,0,\pi/2)$ and must plan to be close to the waypoints marked with stars~(dashed boxes) at instants~$k = 5$ then~$k = 10$, to hit the~(possible) obstacle at instant~$k = 13$, and be in the terminal set for~$k = N = 15$.

The robust control problem~\eqref{P:robust} solved in this scenario for~$\delta = 0.1$ is given by
\begin{prob*}
	\minimize_{\bx_k,\,\bu_k}&
		&&\bx_N^T \bP \bx_N + \sum_{k = 0}^{N-1} \bx_k^T \bQ \bx_k + \bu_k^T \bR \bu_k 
	\\
	\subjectto& &&\Pr\left[ \bu_k \in \calU \right] \geq 1-\delta
		\text{,} \quad \Pr\left[ \bx_k \in \calX \right] \geq 1-\delta
	\\
	&&&\Pr\left[ \bx_{N} \in \calX_N \right] \geq 1-\delta
	\\
	&&&\bx_{k+1} = \bA \bx_k + \bB \bu_k \text{,} \quad k = 0, \dots, \ell-2
	\\
	&&&\bxh_{\ell} = \bA \bx_{\ell-1} + \bB \bu_{\ell-1}
	\\
	&&&[\bx_{\ell}]_{\calC} = [\bxh_{\ell}]_{\calC}
		\text{,} \quad
		[\bx_{\ell}]_{\bar{\calC}} = \frac{m}{m + m_j} [\bx_{\ell}]_{\bar{\calC}}
		\text{,}
	\\
	&&&\bx_{k+1} = \bA \bx_k + \bB^{\Delta} \bu_k \text{,} \quad k = \ell, \dots, N-1
\end{prob*}
where~$\calC = \{1,2,3,4,5,6,10,11,12\}$ pick out the entries of the state vector corresponding to~$(x,y,z,\phi,\theta,\psi,p,q,r)$ which are conserved across the shock with the obstacle and~$\bar{\calC} = \{7,8,9\}$ pick out the entries of the state vector corresponding to~$(u,v,w)$. Note that due to the form of the disturbance~$\Delta$, the chance constraints can be done without by simply solving the problem simultaneously for~$\Delta = \{0,0.1\}$.

The resilient control problem is posed similarly but using the formulation in~\eqref{P:resilient} with~$h(\bs) = \norm{\bs}^2$:
\begin{prob*}
	\minimize_{\bx_k,\,\bu_k,\,\bs_{u,k},\,\bs_{x}}&
		&&\bx_N^T \bP \bx_N + \sum_{k = 0}^{N-1} \bx_k^T \bQ \bx_k + \bu_k^T \bR \bu_k 
	\\
	&& {}&+ \E\left[ \norm{\bs_{x}(\Delta)}^2
		+ \sum_{k = 0}^{N-1} \norm{\bs_{u,k}(\Delta)}^2 \right]
	\\
	\subjectto& &&\bu_k - \bs_{u,k}(\Delta) \in \calU
		\text{,} \quad \Delta = \{0,0.1,1,10\}
		\text{,}
	\\
	&&&\bx_{N} - \bs_{x}(\Delta) \in \calX_N
	\text{,} \quad \Delta = \{0,0.1,1,10\}
		\text{,}
	\\
	&&&\bx_k \in \calX
	\\
	&&&\bx_{k+1} = \bA \bx_k + \bB \bu_k \text{,} \quad k = 0, \dots, \ell-2
	\\
	&&&\bxh_{\ell} = \bA \bx_{\ell-1} + \bB \bu_{\ell-1}
	\\
	&&&[\bx_{\ell}]_{\calC} = [\bxh_{\ell}]_{\calC}
		\text{,} \quad
		[\bx_{\ell}]_{\bar{\calC}} = \frac{m}{m + \Delta} [\bx_{\ell}]_{\bar{\calC}}
		\text{,}
	\\
	&&&\bx_{k+1} = \bA \bx_k + \bB^{\Delta} \bu_k \text{,} \quad k = \ell, \dots, N-1
\end{prob*}

\subsection{Online extension: adapting to wind gusts}

Throughout this scenario, the mass of the drone remains constant, but we consider external disturbances by taking~$\bw_k \neq \zeros$ in~\eqref{E:dynamics_discrete}. The adaptation is performed by solving the resilient problem~\eqref{P:resilient} in an MPC fashion using observed disturbances. At each time-step~$t$, the autonomous agent uses the wind intensity realized at~$t-1$ and its current state~$\bx(t)$ to plan its actions over a~$N = 10$-steps horizon and then proceeds to execute the first action. Observing the wind disturbance suffered at this new instant~$t$, the agent then starts planning for his next step in a similar manner.

The dynamics used for the planning are those described in~\eqref{E:dynamics_discrete}. The LQR costs and the control input constraint set~$\calU$ are the same as in the previous simulation. So that our linearization yields a good approximation, we consider the angular state constraint~$-\pi/9 \leq \phi,\theta \leq \pi/9$ and~$-\pi \leq \psi \leq \pi$ and limit linear and angular velocities to the range~$[-10,10]$. For safety, the quadrotor must fly within~$(x,y,z) \in [-10,0.1] \times [-0.5,10.1] \times [4,6]$~(within the dashed lines in Fig.~\ref{F:online}), although the resilient controller is allowed to modify this set. We collect these definitions in the set~$\calX$. The terminal set~$\calX_N$~(shown in grey in Fig.~\ref{F:online}) follows the constraints already impose on the angular positions and additionally requires that all velocities~$(u,v,w,p,q,r)$ be within~$[-0.1,0.1]$ and that the linear position~$(x,y,z) \in [-0.1,0.1] \times [-0.1,0.1] \times [-0.1,0.1]$~(gray region in Fig.~\ref{F:navigation}). The quadrotor starts stationary from~$(x,y,z,\phi,\theta,\psi) = (0,10,5,0,0,-\pi/2)$.

The planning of the quadrotor actions is performed using the resilient control problem
\begin{prob*}
	\minimize_{\substack{\bx_k, \bu_k, \bs_k}}&
		&&\bx_N^T \bP \bx_N + \sum_{k = 0}^{N-1} \bx_k^T \bQ \bx_k + \bu_k^T \bR \bu_k 
	\\
	&& {}&+ \sum_{k = 0}^{N-1} \norm{\bs_{x,k}}^2 + \norm{\bs_{u,k}}^2
	\\
	\subjectto& &&\bu_k - \bs_{u,k} \in \calU
		\text{,}
	\\
	&&&\bx_{k} - \bs_{x,k} \in \calX
		\text{,}
	\\
	&&&\bx_N \in \calX_N
	\\
	&&&\bx_{k+1} = \bA \bx_k + \bB \bu_k + \bW \bw
	\text{.}
	\\
	&&&\bx_0 = \bx(t)
		\text{, } \quad \bw = \bw(t-1)
\end{prob*}
where~$\bw(t-1)$ is the wind intensity suffered during the previous time step. Wind gusts are simulated by taking~$f_{wx} = 0.1$ at time step~$t = 2$, $f_{wx} = 0.6$ at time step~$t = 5$, and~$f_{wx} = 0.5$ at time step~$t = 7$. Observe that the autonomous agent does not know the true value of the disturbances before they occur. It relies solely on observations in a model-free manner.

\end{document}